\newtheorem{defn}{Definition}[section]
\newtheorem{thm}{Theorem}[section]
\newcommand{\R}{\mathbb{R}}
\newcommand{\E}{\mathbb{E}}
\newcommand{\N}{\mathbb{N}}
\newcommand{\PP}{\mathbb{P}}
\newcommand{\1}{\mathbbm{1}}
\newcommand{\epsi}{\varepsilon}
\definecolor{brilliantrose}{rgb}{1.0, 0.33, 0.64}
\definecolor{amber}{rgb}{1.0, 0.75, 0.0}
\definecolor{amethyst}{rgb}{0.6, 0.4, 0.8}
\definecolor{carrotorange}{rgb}{0.93, 0.57, 0.13}
\definecolor{rosewood}{rgb}{0.4, 0, 0.04}
\definecolor{lincolngreen}{rgb}{0.11, 0.35, 0.02}
\definecolor{dukeblue}{rgb}{0, 0, 0.61}
\newcommand\new[1]{{\color{black}#1}}
\author{
        Desmond John Higham%
            \thanks{%
           School of Mathematics,
           University of Edinburgh,
           Edinburgh, EH9 3FD, UK
           (\email{d.j.higham@ed.ac.uk}).
        }
        \and
        Henry-Louis de Kergorlay%
    \thanks{%
           School of Mathematics,
           University of Edinburgh,
           Edinburgh, EH9 3FD, UK
           (\email{hdekerg@ed.ac.uk})
           }
        }
\date{}
\title{Mean Field Analysis of Hypergraph Contagion Models\thanks{
\funding{Both authors were supported by Engineering and Physical
     Sciences Research Council grant EP/P020720/1.}}}
\begin{document}

\maketitle

\begin{abstract}
    We typically interact in groups, not just 
     in pairs. For this reason, it has recently been proposed that the spread of information, opinion or disease  
      should be modelled over a hypergraph rather than a standard graph.
       The use of hyperedges naturally allows for a 
       nonlinear rate of transmission, in terms of 
       \new{both the group size and the number of infected group members}, as is the case, for example, when social distancing is encouraged. 
       We consider a general class of individual-level,
        stochastic, 
        susceptible-infected-susceptible models 
          on a hypergraph, and focus on a mean field 
     approximation proposed in 
     [Arruda et al., Phys. Rev. Res., 2020].
     We derive spectral conditions under which the 
     mean field model predicts local or global stability of the infection-free state.
     We also compare these results with 
     \new{(a) a new condition that we derive for decay to zero in mean for the exact process}, 
      (b) 
      conditions for a different mean field approximation in 
      [Higham and de Kergorlay, Proc.\ Roy. Soc. A, 2021],
       and 
      (c) numerical simulations of the microscale model. 
\end{abstract}

\begin{keywords}
  compartmental, collective contagion, epidemiology, spectral analysis, susceptible-infected-susceptible.  
\end{keywords}

\begin{AMS}
 92D30, 60J27
\end{AMS}

\section{Motivation and Background}
\label{sec:mot}

Biological and social contagion processes 
can be used to model the way that opinions, rumours, ideas 
or diseases propagate through a community
\cite{DW05,KMS17}.
Traditionally a graph, or network, 
is used to represent the 
possible routes for 
person-to-person 
transmission
\cite{epidemicsSpread,HS2020,virusSpreadInNetworks,spreadingEigenvalue}.
Recent work has suggested that
it is beneficial to 
account directly for the  
higher-order group structures
that 
arise in human-to-human interactions, using hypergraphs
\cite{APM20,HdK21,heterogeneityHypergraph} 
or simplicial complexes \cite{simplicialSocialContagion,Torres_2020,DZLL21}.
Indeed, beyond-pairwise interactions are also relevant  
in many other social, economic and technological settings
\cite{ABAMPL21,BCILLPYP21,BASJK18,benson2016higher,ER06}.

In the context of opinion dynamics, an individual may 
be affected differently if multiple members of the same 
group (such as a workplace or household) express a view 
than if the same number of contacts from different groups
express that view \cite{simplicialSocialContagion}; this is an example of 
a \emph{majority effect} \cite{Maj18}.
Similarly, 
in the spread of a disease,
having multiple infected contacts in the same group
may lead to a different infection rate 
than having 
the same number of contacts across independent groups
\cite{heterogeneityHypergraph}.
For example, 
(unknowingly) 
sharing a photocopier 
with four 
infected colleagues may not be four times as risky as
sharing it with 
one infected colleague, if the item is cleaned regularly.
On the other hand, if there is a viral load threshold
\cite{Ebola15}
then sharing a car with 
four infected colleagues may be more than four times as risky
as sharing a car with one infected colleague.
Moreover the overall group size may have an effect---for a fixed classroom space, there may be a cutoff on the number students beyond which attempts at social distancing become ineffective.

For these reasons, it is natural to consider
a model of spreading that (a) uses information about the
groups present, rather than 
simply the resulting pairwise interactions, and 
(b) allows for the transmission rate to be 
a nonlinear function of the number of 
active individuals.
\new{Particular nonlinearities of interest are the 
concave, or 
\emph{collective suppression}, case \cite{HdK21}
and the threshold, or
\emph{collective contagion}, case
\cite{APM20,HdK21,simplicialSocialContagion,heterogeneityHypergraph}.
This leads to the
hypergraph-based model 
that we describe in section~\ref{sec:mod}, and 
the mean field approximation from \cite{APM20} that we describe in section~\ref{sec:mf}.
Sections~\ref{sec:exact} and \ref{sec:mf_analysis}
give stability analysis for 
the exact and mean field processes, respectively. 
In section~\ref{sec:compare} we compare
results with those for an alternative mean field model
of \cite{HdK21}.
An unusual feature of the mean field model in 
\cite{APM20} is that although it takes the form of 
a deterministic ODE system
with real-valued components, 
it evaluates 
the nonlinear infection rate 
function only at non-negative integer arguments, 
just as the exact stochastic model does.
This feature complicates the analysis, but we show that it
offers concrete benefits when the nonlinearity is concave.
Illustrative computational experiments are 
described in 
section~\ref{sec:compute}.
Corresponding results for a more general and flexible version of the 
hypergraph-based model are given in 
section~\ref{sec:multi}, and conclusions appear in section~\ref{sec:conc}.}

To be concrete, we describe the models
and analysis in the language of 
epidemiology, 
but we emphasize that the concepts and results 
are relevant in other scenarios.

\new{The main contributions of this work are:
\begin{itemize}
    \item for the exact model: a condition that guarantees decay to zero in mean of the infection level
(Theorem~\ref{thm: vanishing condition for exact mean field model}) 
and, for concave nonlinearity, a condition that guarantees
exponential decay to zero of the disease level
(Theorem~\ref{thm:exact_concave_decay}),
    \item for the mean field model of 
    \cite{APM20}:   
    a condition for local asymptotic stability of the 
    disease-free state (Theorem~\ref{thm: local stability}),
    and conditions for global asymptotic stability of the 
    disease-free state with collective suppression and collective infection nonlinearities 
    (Theorems~\ref{thm: global stability concave} and 
    \ref{thm: global stability collective}),
    \item extensions for the mean field model associated with a 
     more general multi-type model where the nonlinear infection rate 
     may depend on the category and size of the hyperedge
     (Theorems~\ref{thm:genralized local asymptotic stability},
     \ref{thm: generalized global stability concave} and 
     \ref{thm:generalized global stability collective}).
\end{itemize}
}

\section{Notation and Individual-level Model}
\label{sec:mod}

Before describing the model, we first introduce some definitions and notation.

A \emph{hypergraph} 
\cite{bretto2013hypergraph}
is a generalization 
of graph in which an edge, now called a \emph{hyperedge},
may join any number of vertices.
More formally,
a hypergraph is a
is a tuple $\mathcal H:=(V,E)$, where 
$V$ is a set of vertices and $E$ 
is a set of nonempty 
subsets of $V$ which specifies the hyperedges.

 We denote
  the number of nodes
 and hyperedges by 
 $n$  and $m$, respectively; that is, 
 $|V| =n$ and $|E| = m$.
 Assuming that the vertices and hyperedges have been ordered in some (arbitrary) way, 
 we use 
 ${\mathcal I}$
 to denote the 
 corresponding 
\emph{incidence matrix};
here 
${\mathcal I} \in \R^{n\times m}$ 
has 
${\mathcal I}_{i h}=1$ if node $i$ belongs to hyperedge $h$ and ${\mathcal I}_{i h}=0$ otherwise.


In our context the vertices represent individuals in a population of size $n$, and the hyperedges
record group interactions. For example,
a set of vertices may form a hyperedge if the 
corresponding individuals live in the same household, 
work in the same office or sing in the same choir.

Following the original idea in 
\cite{SISonHypergraphs},
which has also been 
studied in 
\cite{APM20,simplicialSocialContagion},
we use a continuous time Markov process
to track the propagation of disease through the population
\new{in a susceptible-infected-susceptible (SIS) framework}.
The state vector $X(t) \in \R^{n}$
is such that 
$X_i(t) = 1$ if vertex $i$ is infected at time 
$t$ and
$X_i(t) = 0$ otherwise.

We assume that the instantaneous recovery rate is given by a constant $\delta > 0$, and we let 
$\lambda_i(X(t))$ denote the state-dependent instantaneous infection rate for vertex $i$, given $X(t)$; that is,
\begin{align}
\PP \left( X_i(t+\epsi) = 1 \, | \, X(t) \right) 
    &=\begin{cases}
      \lambda_i(X(t)) \, \epsi + o(\epsi), &\text{~if~} X_i(t) = 0,\\
       1 - \delta \, \epsi + o(\epsi), &\text{~if~} X_i(t) = 1, 
    \end{cases}
     \label{eq:Xone}
\end{align}
and 
\begin{align}
\PP \left( X_i(t+\epsi ) = 0 \, | \, X(t) \right) 
    &=\begin{cases}
       \delta \, \epsi + o(\gamma), &\text{~if~} X_i(t) = 1,\\
       1 - \lambda_i(X(t)) \, \epsi  + o(\epsi), &\text{~if~} X_i(t) = 0.
    \end{cases}
    \label{eq:Xzero}
\end{align}
In this way, specifying the model reduces to 
defining the infection rates, $\lambda_i (X(t))$.
We mention that in the standard graph setting
\cite{epidemicsSpread,virusSpreadInNetworks,spreadingEigenvalue}, where interactions involve only pairs of vertices,
$\lambda_i (X(t))$ is taken to be proportional to the 
number of infected neighbours of vertex $i$ at time $t$.
Hence, in that case, the infection rate is linear in the number of infected neighbours.
As discussed in section~\ref{sec:mot}, we are interested in the setting of group interactions 
and possibly nonlinear 
infection rates.

Now,
writing $X_j$ rather than $X_j(t)$ for convenience,
we will assume that for a given vertex $i$, 
the contribution to the overall 
infection rate from a given hyperedge 
$h$ is 
\begin{equation}
\beta 
\,
{\mathcal I}_{i h} 
\,
f(
\sum_{j=1}^{n} 
{\mathcal I}_{j h}
X_j
).
\label{eq:hinf}
\end{equation}
\new{
Here, when ${\mathcal I}_{i h} =1$, so that $i$ is a member of the hyperdge, 
the argument 
passed to the function $f$ 
is the number of infected individuals
to which $i$ is exposed in this hyperedge.
Hence  
$f$ describes the
dependence of the infection rate
on the number of infected individuals.
The disease cannot spread unless there is at least one infected individual in the hyperedge, so we may assume throughout that $f(0) = 0$.
The factor 
$\beta$ 
in (\ref{eq:hinf}) 
represents the inherent 
infectiousness of the disease.}

For example, consider a one-hour meeting
between a predefined group of co-workers 
(forming a hyperedge) 
that takes place in a 
dedicated meeting
room.
Suppose further that, for this size of meeting room,
five
infected individuals, but no fewer, 
generate sufficient viral load to pass on the infection
(perhaps through 
airborne microdroplets
or through indirect contact).
Then a 
suitable nonlinearity in 
(\ref{eq:hinf})
could be 
$f(x) = c \, \max\{0,x -4\}$
or
$f(x) = c \, \1(x \ge 5)$, 
for some constant $c$; these are of 
collective contagion 
form 
\cite{APM20,HdK21,simplicialSocialContagion,heterogeneityHypergraph}.
Now suppose that the meeting room is in continual use,
for different groups (hyperedges) within the workforce,
and that vertex $i$ may
participate in several meetings.
We may then take 
the sum of (\ref{eq:hinf}) over all
groups (hyperedges).

For the purpose of analysis, it will be useful to 
categorize these hyperedges according to their size,
so we have categories
${\mathcal C}_2,\dots,{\mathcal C}_K$
with 
$h \in {\mathcal C}_k \Leftrightarrow |h| = k $.
\new{
Then the overall infection rate for vertex $i$ may be written 
\begin{equation}
\lambda_i(X(t)) 
= 
\beta 
\,
\sum_{k=2}^{K}
\sum_{h\in \mathcal C_k}
{\mathcal I}_{i h}
\,
f (
\sum_{j=1}^{n} 
{\mathcal I}_{j h}
X_j
).
\label{eq:Xinf}
\end{equation}
}

It is natural to generalize the expression 
(\ref{eq:Xinf}) to incorporate
different types of hyperedge;
for example these may correspond
to groups that congregate in various sizes of 
classroom, workspace,
residence, or
vehicle,
and groups that interact 
through various kinds of 
sports or leisure activities.
Each different type of hyperedge 
may be given its own  
function 
$f$ to quantify the
dependence of the infection rate
on the number of infected individuals in that setting, and
$\lambda_i(X(t)) $
in (\ref{eq:Xinf}) would generalize to include the sum over all contributions.
The analysis below extends readily to this case, at the expense of 
notational complexity.
For the sake of clarity, we therefore state and prove results for the 
one-type model (\ref{eq:Xinf}), and in Section~\ref{sec:multi}
we explain how the results extend to the multi-type model.

In \cite{APM20} the authors considered a model of the form  
(\ref{eq:Xone}), (\ref{eq:Xzero}), (\ref{eq:Xinf})  
with a particular collective contagion nonlinearity $f$.
(More precisely, the model in \cite{APM20} is covered by the multi-type
setting of Section~\ref{sec:multi}.)
A first order, or mean field, approximation to the individual-level 
model 
was derived in 
\cite{APM20}, and the dynamical behaviour of the 
resulting ODE system was investigated numerically.
In the next section we 
describe this mean field approach for a general 
nonlinearity, $f$. 
\new{Later, in section~\ref{sec:compare}, we compare the performance of this model with another, simpler, mean field approximation 
that was derived and studied in \cite{HdK21} based on the
idea of commuting the order of $\E$ and $f$.}

\section{Mean Field Hypergraph Models}
\label{sec:mf}

The rate of infection expressed in (\ref{eq:Xinf}) is random.
\new{To make large-scale simulations tractable, and to 
facilitate analysis, it is natural to focus on  
the evolution of the the expected processes $(p_i(t))_{t\geq 0}:=(\E[X_i(t)])_{t\geq 0}$, $i\in\{1,2,\dots,n\}$.
Substituting the random rates of infection by their expectation,
gives}
\begin{equation}\label{eq: exact mf}
\frac{d p_i}{dt}=\E[
\lambda_i(X(t))
](1-p_i)-\delta p_i.
\end{equation}
Taking expected values in (\ref{eq:Xinf}), the expected rate of infection may be written
\begin{eqnarray}
\E[
\lambda_i(X(t))
]
&=&
\beta \sum_{k=2}^K\sum_{h\in \mathcal C_k}\mathcal I_{i h}\E[f(\sum_{j=1}^n X_j \mathcal I_{j h})] \nonumber \\ 
&=& 
\beta \sum_{k=2}^K\sum_{h\in \mathcal C_k}\mathcal I_{i h}\sum_{l=1}^k f(l)\mathbb P(\sum_{j=1}^n X_j \mathcal I_{jh} = l).
\label{eq:mean_exact}
\end{eqnarray}
\new{This expression defines the expected rate exactly, but it does not 
appear to be amenable to numerical simulation.
In \cite{APM20}, an approximation was introduced by  
 assuming independence of the $X_j$, giving}
\begin{equation}\label{eq:indep}
\mathbb P(\sum_{j=1}^n X_j \mathcal I_{jh} = l)\approx \Psi(h,l) := \sum_{J_l\subset h}\prod_{j\in J_l}p_j\prod_{j\in h\setminus J_l}(1-p_j),
\end{equation}
where $J_l$ runs over all possible subsets of nodes of hyperedge $h$, of size $l$.
To avoid cumbersome notation, we do not explicitly denote the dependence of $p_j$ on $t$ or the dependence 
of $\Psi(h,l)$ on the $p_j$.
With this approximation, the expected processes $P(t):=(p_i(t))_{i=1}^n$ satisfy the deterministic ODE system 
\begin{equation}\label{eq:dynamical system}
\frac{d P(t)}{dt}=g(P(t)),
\end{equation}
where $g:\R^n\to\R^n$ is defined by
\begin{equation}
g_i(P(t)):=\beta\sum_{k=2}^{K}\sum_{h\in \mathcal C_k}\mathcal I_{i h}(\sum_{l=1}^kf(l)\Psi(h,l))(1-p_i(t))-\delta p_i(t).
\label{eq:gidef}
\end{equation}
We emphasize that, with a slight abuse of notation, $p_i(t)$ is now being used to 
denote a mean field approximation to 
$\E[X_i(t)]$. We also note that the factors 
$\mathcal I_{i h}$ in (\ref{eq:gidef}) implicitly depend on
$k$ through the hyperedge 
constraint $h\in \mathcal{C}_k$.
To make the model physically reasonable we 
assume that the initial conditions satisfy 
$0 \le p_i(0) \le 1$ for $ i = 1,\ldots,n$,
and we note that 
$0 \le p_i(t) \le 1$ for $ i = 1,\ldots,n$ then follows for all $t > 0$.

\new{This mean field ODE was derived and studied numerically
in \cite{APM20} with 
an emphasis on first-and second-order transitions,
bistability and 
hysteresis.
Our aim in this work is to 
derive analytical results that address a more fundamental question: under what conditions will the disease will die out?
We do this by studying the local and global stability of the 
disease-free state.
In the next section, we show that it is possible to analyse the 
exact expected process, and in section~\ref{sec:mf_analysis}
we move on to the 
mean field approximation
(\ref{eq:dynamical system})--(\ref{eq:gidef}).
}

\section{The Exact Expected Process}\label{sec:exact}
Here we show that, while the exact equation describing the dynamics of the expected processes (\ref{eq: exact mf}) does not seem to be amenable to numerical simulation, an upper bound argument allows us to derive vanishing conditions. In the following analysis, and throughout the remaining sections, we define the symmetric matrix 
$W\in \R^{n\times n}$ by 
\[
W_{i j}:=\sum_{k=2}^K\sum_{h\in \mathcal C_k}\mathcal I_{ih}\mathcal I_{jh},
\]
so that $W_{ij}$
records the number of hyperedges containing both nodes $i$ and $j$. Given a symmetric matrix $A$, we let $\lambda(A)$ denote its largest eigenvalue.
\new{Recall from (\ref{eq: exact mf}) and 
(\ref{eq:mean_exact})} that we have, for $i\in\{1,2,\dots,n\}$,
\begin{equation}\label{eq: exact mean field model}
\frac{d p_i}{dt}=\beta (1-p_i)\sum_{k=2}^K\sum_{h\in \mathcal C_k}\mathcal I_{i h}\E[f( X_j \mathcal{I}_{j h})]-\delta p_i.
\end{equation}
We also define the constant $c_f$ as follows.

\begin{defn}
Let
\[
 c_f := 
    \max_{x\in \{1,2,\dots,K\}}
       \frac{ f(x) } { x}.
    \]
\end{defn}
With this definition, we have 
\begin{align*}
\beta \sum_{k=2}^K\sum_{h\in \mathcal C_k}\mathcal I_{ih}\E[f(\sum_{j=1}^nX_j\mathcal I_{jh})] &\leq \beta \, c_f\sum_{k=2}^K\sum_{h\in \mathcal C_k}\mathcal I_{ih}\sum_{j=1}^np_j\mathcal I_{jh}\\
    &= \beta \, c_f\sum_{j=1}^nW_{ij}p_j.
\end{align*} 
Hence, from 
(\ref{eq: exact mean field model}), 
we have the following differential inequalities, for  $i\in \{1,2,\dots,n\}$, 
\begin{equation}\label{eq: ODI}
\frac{d p_i}{dt}\leq  c_f \, \beta \, \sum_{j=1}^nW_{ij}p_j(1-p_i)-\delta p_i.
\end{equation}
This system of differential inequalities can be analyzed after invoking the result below.
\begin{thm}[\cite{ODI}]\label{thm: ODI}
Suppose that $u$ satisfies $u'(t)\leq f(u(t),t)$ and $y$ satisfies $y'(t)=f(y(t),t)$, with boundary condition $u(t_0)=y(t_0)$. Then \[\begin{cases}
\forall\ t<t_0,\ u(t)\geq y(t)\\
\forall\ t>t_0,\ u(t)\leq y(t).
\end{cases}\]
\end{thm}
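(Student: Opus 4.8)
The plan is to give the classical differential-inequality comparison argument, which has three ingredients: a symmetry reduction, a first-crossing-time argument, and a Gr\"onwall-type estimate. First I would reduce to a single case. By the reflection $(u,y)\mapsto(-u,-y)$ together with $f\mapsto \hat f$, $\hat f(v,t):=-f(-v,t)$, the statement with hypothesis $u'\ge f(u,t)$ and reversed conclusions is equivalent to the one with $u'\le f(u,t)$; and by the time reversal $t\mapsto -t$ the ``$t<t_0$'' conclusion reduces to the ``$t>t_0$'' one. So it suffices to prove
\[
u'(t)\le f(u(t),t),\quad y'(t)=f(y(t),t),\quad u(t_0)=y(t_0)\ \Longrightarrow\ u(t)\le y(t)\ \text{ for all } t\ge t_0 .
\]
I would also make explicit the standing assumption---left tacit in the statement but satisfied in every use we make of it---that $f(\cdot,t)$ is locally Lipschitz in its first argument; in our applications the right-hand side of the ODE is a polynomial in the state variable, so this is automatic.

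For the core step I would argue by contradiction. Suppose $u(t_1)>y(t_1)$ for some $t_1>t_0$, and set $t_2:=\sup\{t\in[t_0,t_1]:u(t)=y(t)\}$, which is well defined because this set contains $t_0$ and is closed. Then $u(t_2)=y(t_2)$ and, by the intermediate value theorem, $w:=u-y>0$ on $(t_2,t_1]$. On $[t_2,t_1]$ the Lipschitz bound gives
\[
w'(t)=u'(t)-y'(t)\le f(u(t),t)-f(y(t),t)\le L\,|w(t)|=L\,w(t),
\]
where $L$ is a Lipschitz constant valid on the compact region covered by the two graphs. Multiplying by $e^{-Lt}$ shows $e^{-Lt}w(t)$ is nonincreasing, so $w(t)\le e^{L(t-t_2)}w(t_2)=0$ throughout $[t_2,t_1]$, contradicting $w(t_1)>0$. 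Hence $u\le y$ on $[t_0,\infty)$, or on whatever common interval of existence is available. An essentially equivalent alternative would be to compare $u$ with the solution $y_{\varepsilon}$ of $y_{\varepsilon}'=f(y_{\varepsilon},t)+\varepsilon$, $y_{\varepsilon}(t_0)=y(t_0)$, show $u<y_{\varepsilon}$ on $(t_0,t_1]$ by the same crossing argument, and then let $\varepsilon\downarrow 0$ using continuous dependence on data.

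The one genuine subtlety---and the step I would be most careful about---is the regularity hypothesis on $f$. Without a Lipschitz-type (or one-sided Lipschitz) condition the non-strict comparison is simply false: taking $f(v,t)=2\sqrt{|v|}$ and $t_0=0$, the functions $u(t)=t^2$ and $y(t)\equiv 0$ satisfy $u'(t)\le f(u(t),t)$, $y'(t)=f(y(t),t)$ and $u(0)=y(0)$, yet $u(t)>y(t)$ for every $t>0$. So any proof must invoke a hypothesis of this kind, and I would state it alongside the theorem, noting once more that it holds trivially in our setting. Everything else---the first-crossing argument, the integrating factor, and the two symmetry reductions---is routine once that hypothesis is in place.
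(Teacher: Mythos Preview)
The paper does not supply its own proof of this result; it is quoted from the cited reference as a standard comparison principle for differential inequalities and then applied as a black box. So there is nothing to compare against, and your task is simply to give a correct proof---which you do. The first-crossing-time plus Gr\"onwall argument is the classical one and is sound as written.

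Your insistence on a Lipschitz (or one-sided Lipschitz) hypothesis is well placed: the non-strict comparison genuinely fails without it, and your $2\sqrt{|v|}$ counterexample is the canonical one. You are also right that in every application the paper makes, the right-hand side is polynomial in the state, so the hypothesis is trivially satisfied; it would be good practice to state it explicitly alongside the theorem, as you suggest. One small wording point: the reduction of the $t<t_0$ case to the $t>t_0$ case requires \emph{both} symmetries in tandem---time reversal alone turns $u'\le f(u,t)$ into $\tilde u'\ge \tilde f(\tilde u,s)$, and the reflection then restores the $\le$ form---so the two reductions you describe are not independent but composed. Your writeup already hints at this, but making the composition explicit would remove any ambiguity.
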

This result readily extends to a system of differential inequalities as in $(\ref{eq: ODI})$, as follows.
\begin{thm}\label{thm: ODI system result}
Suppose that $\{u_i\}_{i=1}^n$ satisfies
\[
\forall\ i\in \{1,2,\dots,n\},\ \forall\ t\in \R,\ u_i'(t)\leq f_i(u(t),t),
\]
that $\{y_i\}_{i=1}^n$ satisfies
\[
\forall\ i\in \{1,2,\dots,n\},\ \forall\ t\in \R,\ y_i'(t)= f_i(y(t),t),
\]
and that for all $i\in \{1,2,\dots,n\}$, $u_i(t_0)=y_i(t_0)$.
Then for all $i\in \{1,2,\dots,n\}$
\[\begin{cases}
\forall\ t<t_0,\ u_i(t)\geq y_i(t)\\
\forall\ t>t_0,\ u_i(t)\leq y_i(t).
\end{cases}\]
\end{thm}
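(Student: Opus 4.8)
The plan is to mirror the perturbation argument behind the scalar comparison principle (Theorem~\ref{thm: ODI}), adapted to the coupled setting, while exploiting the structural feature that the right-hand sides actually arising in (\ref{eq: ODI}) are \emph{cooperative}: for $f_i(P,t)= c_f\,\beta\sum_{j}W_{ij}p_j(1-p_i)-\delta p_i$ one has $\partial f_i/\partial p_j = c_f\,\beta\,W_{ij}\ge 0$ for every $j\ne i$, since $W_{ij}\ge 0$. This quasi-monotonicity (the M\"uller--Kamke condition) is precisely what makes a comparison principle available for systems, so I would record it as the standing hypothesis under which the theorem is proved and note that it holds for (\ref{eq: ODI}). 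Throughout I also assume the mild regularity (continuity of the $f_i$, local Lipschitz in the state variable) that makes solutions unique and depend continuously on the data; this is already implicit in the scalar statement.

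By the time reversal $t\mapsto 2t_0-t$ it suffices to prove the claim for $t>t_0$, so fix $T>t_0$ and work on $[t_0,T]$. For $\epsi>0$ let $y^{\epsi}$ solve $\dot y_i^{\epsi}=f_i(y^{\epsi}(t),t)+\epsi$ with $y_i^{\epsi}(t_0)=y_i(t_0)$; for $\epsi$ small this exists on $[t_0,T]$ and $y^{\epsi}\to y$ uniformly there as $\epsi\downarrow 0$. The core step is to show $y_i^{\epsi}(t)>u_i(t)$ for all $i$ and all $t\in(t_0,T]$. First, since $y^{\epsi}(t_0)=y(t_0)=u(t_0)$ and $u_i'(t_0)\le f_i(u(t_0),t_0)$, we get $\tfrac{d}{dt}(y_i^{\epsi}-u_i)(t_0)\ge\epsi>0$ for each $i$, so $\phi(t):=\min_i\big(y_i^{\epsi}(t)-u_i(t)\big)$ is $>0$ on some $(t_0,t_0+\eta)$. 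If the desired inequality fails, let $\tau:=\inf\{t\in(t_0,T]:\phi(t)\le 0\}$; then $\tau\ge t_0+\eta$, $\phi>0$ on $(t_0,\tau)$, and by continuity $\phi(\tau)=0$, so there is an index $i$ with $y_i^{\epsi}(\tau)=u_i(\tau)$ and $y_j^{\epsi}(\tau)\ge u_j(\tau)$ for all $j$. Cooperativity gives $f_i(y^{\epsi}(\tau),\tau)\ge f_i(u(\tau),\tau)$, hence
\[
\tfrac{d}{dt}\big(y_i^{\epsi}-u_i\big)(\tau)=f_i(y^{\epsi}(\tau),\tau)+\epsi-u_i'(\tau)\ \ge\ f_i(u(\tau),\tau)+\epsi-u_i'(\tau)\ \ge\ \epsi\ >\ 0,
\]
which contradicts the fact that $y_i^{\epsi}-u_i>0$ on $(t_0,\tau)$ and vanishes at $\tau$ (forcing the left derivative to be $\le 0$). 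Therefore $y_i^{\epsi}(t)>u_i(t)$ on $(t_0,T]$, and letting $\epsi\downarrow 0$ yields $y_i(t)\ge u_i(t)$ on $(t_0,T]$; as $T>t_0$ was arbitrary, this proves the $t>t_0$ assertion, and the $t<t_0$ assertion follows by the reversal above.

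The only genuine obstacle is the one already flagged: for general coupled systems the comparison principle is simply false, and the argument uses the sign condition $W_{ij}\ge 0$ in an essential way at the crossing time $\tau$. The remaining points are routine ``$\epsi$-of-room plus no-first-crossing'' bookkeeping: establishing that a well-defined first-equality time $\tau$ exists (which reduces to strict positivity of $\phi$ just after $t_0$, itself a consequence of $u_i(t_0)=y_i(t_0)$ together with $u_i'(t_0)\le f_i(u(t_0),t_0)$), and invoking continuous dependence on parameters to pass to the limit $\epsi\downarrow 0$. If one wished to keep Theorem~\ref{thm: ODI} as a genuine input rather than re-running its proof in vector form, one could alternatively cite a standard Kamke/M\"uller comparison theorem for quasi-monotone systems; the version above, however, is self-contained given only the scalar statement's hypotheses.
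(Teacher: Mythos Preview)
The paper does not actually prove this theorem; it simply asserts that the scalar result ``readily extends to a system of differential inequalities'' and then states Theorem~\ref{thm: ODI system result} without argument. So there is no proof in the paper to compare against, and your proposal is in fact supplying what the paper omits.

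Your instinct to add the quasi-monotonicity (Kamke--M\"uller) hypothesis is exactly right, and it is worth stressing that this is not optional: the theorem as stated, for arbitrary $f_i$, is false. Your forward-time argument (perturb by $\epsi$, look at the first crossing time, use $\partial f_i/\partial p_j\ge 0$ for $j\ne i$ to force a contradiction) is the standard and correct proof of the Kamke comparison theorem, and it applies cleanly to the system~(\ref{eq: ODI}) since $W_{ij}\ge 0$. That fully justifies Corollary~\ref{cor: ODI system result}, which is the only consequence the paper uses.

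There is, however, a genuine gap in your handling of the $t<t_0$ half. The time reversal $t\mapsto 2t_0-t$ sends a cooperative vector field to a competitive one (the off-diagonal partials change sign), so you cannot simply reuse the forward argument. In fact the $t<t_0$ conclusion fails even for cooperative systems: with $n=2$, $f_1(u)=u_2$, $f_2(u)=u_1$, $t_0=0$, $y\equiv 0$, the pair $u_1(t)=-t^2$, $u_2(t)=-2t$ satisfies $u_i'\le f_i(u)$ on $|t|<\sqrt{2}$ with $u(0)=y(0)$, yet $u_1(t)<y_1(t)$ for $t<0$. So the backward assertion in the paper's statement is not salvageable under the cooperative hypothesis alone. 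Since the paper only ever invokes the $t\ge t_0$ direction (Corollary~\ref{cor: ODI system result}), this does not affect any downstream result, but you should either drop the $t<t_0$ claim or note explicitly that it requires a separate (competitive) structural assumption that is not present here.
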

For our purposes, we have the following. 
\begin{corollary}\label{cor: ODI system result}
If for all $i\in \{1,2,\dots,n\}$, $u_i'(t)\leq g_i(u(t))$, $y_i'(t)=g_i(y(t))$ and $u_i(0)=y_i(0)$, then for all $i\in \{1,2,\dots,n\}$ and all $t\geq 0,$ $u_i(t)\leq y_i(t).$
\end{corollary}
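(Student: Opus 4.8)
The plan is to obtain Corollary~\ref{cor: ODI system result} as an immediate specialization of Theorem~\ref{thm: ODI system result}: the corollary is precisely that theorem in the autonomous case, with base point $t_0 = 0$ and with only the forward-in-time half of the conclusion retained.

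Concretely, I would set $f_i(z,t) := g_i(z)$ for every $z \in \R^n$ and every $t$; that is, I treat the right-hand side as a (trivially) time-dependent field that happens not to depend on $t$. Then the hypotheses $u_i'(t) \le g_i(u(t))$ and $y_i'(t) = g_i(y(t))$ become $u_i'(t) \le f_i(u(t),t)$ and $y_i'(t) = f_i(y(t),t)$, while $u_i(0) = y_i(0)$ is the matching boundary condition at $t_0 = 0$. Applying Theorem~\ref{thm: ODI system result} then gives $u_i(t) \le y_i(t)$ for all $i$ and all $t > 0$; and at $t = 0$ the boundary condition already gives $u_i(0) = y_i(0)$, so $u_i(t) \le y_i(t)$ in fact holds for all $t \ge 0$, which is the claim.

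There is essentially no obstacle here; the only things worth checking are two points of bookkeeping. First, Theorem~\ref{thm: ODI system result} is stated with the differential (in)equalities holding for all $t \in \R$, whereas in the applications we have in mind (for instance to the system (\ref{eq: ODI})) they hold only on $[0,\infty)$; since we invoke only the forward conclusion $t > t_0$, it is enough that the hypotheses hold on $[t_0,\infty)$, so one may either restrict the statement accordingly or extend $u$ and $y$ arbitrarily to negative times. Second, existence of the comparison solution $y$ is part of the hypothesis, and the quasimonotone (off-diagonal monotonicity) structure that makes the system comparison valid is already subsumed in Theorem~\ref{thm: ODI system result}, so nothing further needs to be imposed on $g$ at this stage. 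Beyond these, the corollary is simply the ``$t_0 = 0$, half-line, autonomous'' reading of the preceding theorem, and the proof is a one-line reduction.
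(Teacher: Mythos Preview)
Your proposal is correct and matches the paper's treatment: the paper states the corollary without proof, regarding it as an immediate specialization of Theorem~\ref{thm: ODI system result} to the autonomous case with $t_0=0$, which is precisely the reduction you spell out. Your bookkeeping remarks about the half-line domain and the implicit quasimonotonicity are sensible caveats, but no further argument is needed.
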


 We are interested in finding conditions under which the spread of the disease predicted by $(\ref{eq: exact mean field model})$ vanishes as $t\to\infty$. By Corollary~\ref{cor: ODI system result} and $(\ref{eq: ODI})$, it suffices to find such conditions for the following, more simple, model
\begin{equation}\label{eq:concave upper bound dynamical system}
    \frac{d P(t)}{dt}=\widetilde{g}(P(t)),
\end{equation}
where $\widetilde{g}:\R^n\to\R^n$ is defined by
\begin{equation}\label{eq:tildegidef}
    \widetilde{g}_i(P(t)):=\beta \, c_f \, \sum_{j=1}^nW_{ij}p_j(t)(1-p_i(t))-\delta p_i(t).
\end{equation}
This system can be analysed \new{by appealing to \cite[Theorem $6.4$]{HdK21} in the case where the infection function is the identity (which, in particular, is concave)}, and we deduce the following result. 
\begin{thm}[Extinction in mean for the exact process]\label{thm: vanishing condition for exact mean field model}
If 
\begin{equation}
\frac{
\beta \,  c_f \, \lambda(W) 
}
{
\delta
}<1,
\label{eq:spec_exact_mean}
\end{equation}
then $0$ is a globally asymptotically stable equilibrium for $(\ref{eq:concave upper bound dynamical system})$ and hence for $(\ref{eq: exact mean field model})$, 
that is, for all $i\in \{0,1,\dots,n\}$ and all initial conditions, $\lim_{t\to \infty}p_i(t)=0$ in $(\ref{eq: exact mean field model})$. 
\end{thm}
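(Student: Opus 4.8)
The plan is to recognise the comparison system (\ref{eq:concave upper bound dynamical system})--(\ref{eq:tildegidef}) as a special case of the mean field model analysed in \cite{HdK21}, quote the global stability result proved there, and then transfer the conclusion to the exact expected process (\ref{eq: exact mean field model}) by the comparison principle of Corollary~\ref{cor: ODI system result}. First I would observe that $\widetilde g$ in (\ref{eq:tildegidef}) is precisely the mean field vector field of \cite{HdK21} for the hypergraph $\mathcal H$, with the \emph{linear} infection function $f(x)=x$ and with inherent infectiousness $\beta c_f$ in place of $\beta$: taking $f$ to be the identity in $\sum_{k=2}^K\sum_{h\in\mathcal C_k}\mathcal I_{ih}\, f(\sum_{j=1}^n \mathcal I_{jh} p_j)$ collapses it to $\sum_{j=1}^n W_{ij} p_j$. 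The identity is nondecreasing, concave and vanishes at the origin, so it meets the hypotheses of \cite[Theorem~6.4]{HdK21}; for it the spectral threshold appearing there specializes, after the rescaling $\beta\mapsto\beta c_f$ and the collapse of the hyperedge sum to $W$, to $\beta c_f\,\lambda(W)/\delta$, i.e.\ to (\ref{eq:spec_exact_mean}). Hence, whenever (\ref{eq:spec_exact_mean}) holds, $0$ is a globally asymptotically stable equilibrium of (\ref{eq:concave upper bound dynamical system}); in particular every solution $y(t)$ of (\ref{eq:concave upper bound dynamical system}) with $y(0)\in[0,1]^n$ satisfies $y_i(t)\to 0$ as $t\to\infty$ for each $i$.

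Next I would push this down to (\ref{eq: exact mean field model}). The exact expected process satisfies $p_i(t)=\E[X_i(t)]\in[0,1]$ for all $t\ge 0$, so in particular $p_i(t)\ge 0$, and the differential inequalities (\ref{eq: ODI}) derived above read $p_i'(t)\le\widetilde g_i(P(t))$; thus each $p_i$ is a subsolution of (\ref{eq:concave upper bound dynamical system}). Let $y(t)$ be the solution of (\ref{eq:concave upper bound dynamical system}) with $y_i(0)=p_i(0)$; since $[0,1]^n$ is forward invariant for $\widetilde g$ this solution exists for all $t\ge 0$ and remains in $[0,1]^n$, where $\widetilde g$ is cooperative (the off-diagonal partials $\partial\widetilde g_i/\partial p_j=\beta c_f W_{ij}(1-p_i)$ are nonnegative there). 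Corollary~\ref{cor: ODI system result} therefore gives $0\le p_i(t)\le y_i(t)$ for every $i$ and every $t\ge 0$, and letting $t\to\infty$ together with $y_i(t)\to 0$ from the first step yields $\lim_{t\to\infty} p_i(t)=0$, which is the assertion.

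The step I expect to require the most care is the identification with \cite[Theorem~6.4]{HdK21}: one must check that the linear function, being the ``flat-slope'' degenerate member of the concave family, genuinely falls under the hypotheses of that theorem, and that its threshold condition really does reduce to $\beta c_f\,\lambda(W)/\delta<1$ under the rescaling $\beta\mapsto\beta c_f$ and the collapse of the hyperedge sum to $W$. Everything else is bookkeeping; the only degenerate subcase is $c_f=0$, which forces $f\equiv 0$ on $\{1,\dots,K\}$ so that no transmission occurs, and then (\ref{eq:spec_exact_mean}) holds automatically and the same argument applies verbatim.
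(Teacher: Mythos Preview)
Your proposal is correct and follows essentially the same route as the paper: identify (\ref{eq:concave upper bound dynamical system})--(\ref{eq:tildegidef}) as the mean field system of \cite{HdK21} with the identity infection function and rescaled infectiousness $\beta c_f$, invoke \cite[Theorem~6.4]{HdK21} to get global asymptotic stability of the origin under (\ref{eq:spec_exact_mean}), and then transfer this to (\ref{eq: exact mean field model}) via Corollary~\ref{cor: ODI system result} and the differential inequalities (\ref{eq: ODI}). Your explicit verification of cooperativity of $\widetilde g$ on $[0,1]^n$ and your handling of the degenerate case $c_f=0$ are welcome additions that the paper leaves implicit.
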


Considering the collective suppression case, where $f$ is concave and $f(0)=0$, we have for all $x\in \N$, $f(x)\leq f(1)x$; hence $c_f =f(1)$. 

For a collective contagion model
of the form $f(x):=c_2\, \1(x\geq c_1)$ for some constants 
\new{$c_1 \geq 1$ and $c_2>0$}, we have for all $x\in \N$, $f(x)\leq c_2 \, x/ c_1$, hence $c_f \le c_2/c_1$.

Theorem~\ref{thm: vanishing condition for exact mean field model} gives a practical condition for the exact model that guarantees 
decay to zero in mean of the infection level of every component.
In the next section we seek similar results for the mean field 
approximation (\ref{eq:dynamical system})--(\ref{eq:gidef}).
This allows us (a) to judge the accuracy of this 
mean field approximation in terms of 
a corresponding spectral threshold, and 
(b) to get insights into the behaviour of a system that 
can be
simulated directly. Also,
in section~\ref{sec:compare} 
we use this analysis to 
compare predictions against those of
the alternative mean field model from \cite{HdK21}.

\section{Analysis of Mean Field Hypergraph Model}
\label{sec:mf_analysis}


Here we analyze the mean field model described in (\ref{eq:dynamical system})--(\ref{eq:gidef}). We find conditions for local and global asymptotic stability of the disease-free state of the process, considering various assumptions on the infection function $f$, including collective contagion and  collective suppression cases. 
Our first result is a spectral condition for local asymptotic stability.

\begin{thm}[General condition for local asymptotic stability]\label{thm: local stability}
If 
\begin{equation} \label{eq:mflocal}
    \frac{\beta \, f(1) \, \lambda(W) }  {\delta} <1,
\end{equation}
then $0\in \R^n$ is a locally asymptotically stable equilibrium for (\ref{eq:dynamical system})--(\ref{eq:gidef}).
\end{thm}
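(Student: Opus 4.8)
The plan is to use the classical linearization theorem (Lyapunov's indirect method): the right-hand side $g$ of the ODE system $(\ref{eq:dynamical system})$--$(\ref{eq:gidef})$ is smooth near $P=0$ and $g(0)=0$, so it suffices to show that the Jacobian $Dg(0)$ is Hurwitz, i.e.\ all of its eigenvalues have strictly negative real part, and then $0$ is locally asymptotically stable.

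First I would record the structural fact that $g$ is in fact a polynomial map: in $(\ref{eq:gidef})$ the $f(l)$ are just real constants, while each $\Psi(h,l)$ is a polynomial in the entries $p_j$, $j\in h$. Moreover $\Psi(h,0)\equiv 1$, and for $l\geq 1$ the polynomial $\Psi(h,l)$ has lowest-order part equal to the $l$-th elementary symmetric polynomial $\sum_{J_l\subset h}\prod_{j\in J_l}p_j$ in the variables $\{p_j:j\in h\}$, so it contains no monomial of degree less than $l$. In particular $\sum_{l=1}^k f(l)\Psi(h,l)$ vanishes at $P=0$, which re-confirms $g(0)=0$.

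Next I would compute $Dg(0)$ entrywise. Writing $g_i(P)=\beta(1-p_i)S_i(P)-\delta p_i$ with $S_i(P):=\sum_{k=2}^K\sum_{h\in\mathcal C_k}\mathcal I_{ih}\sum_{l=1}^k f(l)\Psi(h,l)$, and using $S_i(0)=0$, the product rule gives $\partial g_i/\partial p_r\big|_{0}=\beta\,\partial S_i/\partial p_r\big|_{0}-\delta\,[\,i=r\,]$, since the term from differentiating the prefactor $(1-p_i)$ is multiplied by $S_i(0)=0$. For $\partial S_i/\partial p_r\big|_0$ only the degree-one part of each $\Psi(h,l)$ matters, and by the remark above this is nonzero only when $l=1$, where $\Psi(h,1)=\sum_{j\in h}p_j+O(|P|^2)$ and hence $\partial\Psi(h,1)/\partial p_r\big|_0=\mathcal I_{rh}$. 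Therefore $\partial S_i/\partial p_r\big|_0=f(1)\sum_{k=2}^K\sum_{h\in\mathcal C_k}\mathcal I_{ih}\mathcal I_{rh}=f(1)\,W_{ir}$, so that $Dg(0)=\beta f(1)\,W-\delta I$. Since $W$ is symmetric, so is $Dg(0)$, and its eigenvalues are exactly $\beta f(1)\mu-\delta$ as $\mu$ ranges over the eigenvalues of $W$; the largest is $\beta f(1)\lambda(W)-\delta$. Condition $(\ref{eq:mflocal})$ says precisely $\beta f(1)\lambda(W)-\delta<0$, so $Dg(0)$ is Hurwitz and the conclusion follows from the linearization theorem.

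I do not anticipate a serious obstacle here; the only place that needs care is the Jacobian computation, namely justifying that among the nonlinear contributions only $\Psi(h,1)$ survives differentiation at $0$ (because $\Psi(h,l)$ has vanishing first derivatives at the origin for $l\geq 2$) and that the $(1-p_i)$ prefactor contributes nothing since $S_i(0)=0$. I would make both points rigorous via the explicit elementary-symmetric-polynomial description of $\Psi(h,l)$, which is why I state that description up front.
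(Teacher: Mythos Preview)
Your proposal is correct and follows essentially the same approach as the paper: linearize at the origin, compute the Jacobian $Dg(0)=\beta f(1)W-\delta I$ by observing that only the $l=1$ term of $\Psi(h,l)$ contributes a first-order part at $P=0$, and conclude via the standard linearization (Hurwitz) criterion. One inconsequential slip: $\Psi(h,0)=\prod_{j\in h}(1-p_j)$, not identically $1$, but since the sum in $(\ref{eq:gidef})$ starts at $l=1$ this does not affect the argument.
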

\begin{proof}
We have  
$g(0)=0$
in 
(\ref{eq:dynamical system})--(\ref{eq:gidef}), so $0 \in \R^n$ is an equilibrium. 
From a standard linearization 
result~\cite{Ve90}, 
local asymptotic stability 
follows if every eigenvalue of the Jacobian matrix $\nabla g(0)$ has a negative real part.
 For $j_0 \neq i$ we compute
\begin{equation}
    \frac{\partial g_i}{\partial p_{j_0}}=
    \beta\sum_k\sum_{h\in \mathcal C_k}\mathcal I_{i h}\mathcal 
    I_{j_0 h}\sum_{l=1}^kf(l)\frac{\partial \Psi}{\partial p_{j_0}}(h,l)(1-p_i), \\
    \label{eq:jac1}
 \end{equation}  
   and along the diagonal
   \begin{equation}
     \frac{\partial g_i}{\partial p_{i}}
     =
    \beta\sum_k\sum_{h\in \mathcal C_k}\mathcal I_{i h}\sum_{l=1}^kf(l)\frac{\partial \Psi}{\partial p_{i}}(h,l)(1-p_i)-\beta\sum_k\sum_{h\in \mathcal C_k}\mathcal I_{i h}\sum_{l=1}^kf(l)\Psi(h,l)-\delta. 
 \label{eq:jac2}
\end{equation}
We see that
$\nabla g(0) = B-\delta I$, where 
\[B_{ij} :=\beta\sum_k\sum_{h\in \mathcal C_k}\mathcal I_{i h}\mathcal I_{j h}\sum_{l=1}^kf(l)\frac{\partial \Psi}{\partial p_{j}}(h,l)|_{P=0}.
\]

Let $h\in E$ be a hyperedge, let $j_0\in V=\{1,2,\dots,n\}$ be a node of the hypergraph, and let $\tilde h:=h\setminus\{j_0\}.$ Using (\ref{eq:indep}), we find
\[
    \frac{\partial \Psi(h,l)}{\partial p_{j_0}}=\mathcal I_{j_0 h}\left(\sum_{J_{l-1}\cup\{j_0\}\subset h}\prod_{j\in J_{l-1}}p_j\prod_{j\in \tilde h\setminus J_{l-1}}(1-p_j)-\sum_{J_l\subset \tilde h}\prod_{j\in J_l}p_j\prod_{j\in \tilde h\setminus J_l}(1-p_j)\right)
    \]
    and hence
\[
 \frac{\partial \Psi(h,l)}{\partial p_{j_0}}|_{P=0}
    = 
\mathcal I_{j_0h}\delta_{l 1},
\]
with Kronecker delta notation, so that $\delta_{xy}=1$ if $x=y$, and $\delta_{xy}=0$ otherwise.
It follows that $B_{ij}=\beta f(1)\sum_{k=2}^{K-1}\sum_{h\in \mathcal C_k}\mathcal I_{ih}\mathcal I_{jh}=\beta f(1) W_{ij}$.

We see that $\nabla g(0)$ is symmetric and
$ \lambda(\nabla g(0))=  
 \lambda(B - \delta I) = \lambda(W) f(1) \beta -\delta$. So it suffices for local 
asymptotic stability that $\lambda(W) f(1)\beta/\delta<1$, as stated.
\end{proof}

The above result has the advantage that it 
does not require specific assumptions on the infection model. However, it is relevant only when
the initial proportion of infected individuals is sufficiently small. We now consider particular infection models, with the aim of
constructing a global asymptotic stability result.
\new{As discussed in section~\ref{sec:mod}, and in more detail 
in \cite{HdK21},} 
two cases of practical relevance are: a \textit{collective suppression model}, characterized by a concave infection function $f$, and a \textit{collective contagion model}, 
characterized by $f(i) = 0$ for $i = 0,1,\ldots,m-1$ with some $m \ge 2$. In the latter case, 
the disease may only start spreading in a hyperedge if the number of infected individuals
in that hyperedge reaches a critical threshold value, $m$.
When the infection function is concave, the local asymptotic stability result obtained in Theorem~\ref{thm: local stability} extends to the case of global asymptotic stability.

\begin{thm}[Global asymptotic stability for a collective suppression model]\label{thm: global stability concave}
Suppose that $f$ is concave. 
\new{If 
the spectral bound (\ref{eq:mflocal})
holds,} 
 then $0\in \R^n$ is globally asymptotically stable for (\ref{eq:dynamical system})--(\ref{eq:gidef}).
\end{thm}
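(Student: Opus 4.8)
The plan is to exploit concavity to show that the mean field vector field $g$ from (\ref{eq:gidef}) is dominated, componentwise and monotonically, by the simpler vector field $\widetilde g$ from (\ref{eq:tildegidef}) with $c_f = f(1)$, and then to invoke the comparison principle (Corollary~\ref{cor: ODI system result}) together with the already-established global stability of $\widetilde g$ under (\ref{eq:mflocal}). First I would record that, since $f$ is concave with $f(0)=0$, we have $f(l) \le f(1)\, l$ for every integer $l \ge 1$. Then, summing against the probability weights $\Psi(h,l)$ and using $\sum_{l} l\,\Psi(h,l) = \sum_{j} \mathcal I_{jh} p_j$ (the mean of the relevant Poisson-binomial count), I get the pointwise bound
\begin{equation*}
\sum_{l=1}^{k} f(l)\,\Psi(h,l) \;\le\; f(1)\sum_{l=1}^{k} l\,\Psi(h,l) \;=\; f(1)\sum_{j=1}^{n}\mathcal I_{jh}\,p_j,
\end{equation*}
valid for all $P \in [0,1]^n$. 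Multiplying by $\beta\,\mathcal I_{ih}(1-p_i) \ge 0$ and summing over $h$ and $k$ yields $g_i(P) \le \widetilde g_i(P)$ for every $i$ and every $P \in [0,1]^n$, with $c_f = f(1)$ in (\ref{eq:tildegidef}).

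Next I would set up the comparison argument. Let $P(t)$ solve the mean field system (\ref{eq:dynamical system})--(\ref{eq:gidef}) with initial data in $[0,1]^n$, and let $Y(t)$ solve (\ref{eq:concave upper bound dynamical system})--(\ref{eq:tildegidef}) with the same initial data $Y(0) = P(0)$. The inequality $\dot p_i = g_i(P) \le \widetilde g_i(P)$ lets us apply Corollary~\ref{cor: ODI system result} (with $u_i = p_i$ and the reference system being $\widetilde g$), giving $0 \le p_i(t) \le y_i(t)$ for all $t \ge 0$ and all $i$; nonnegativity of $p_i(t)$ holds because $g_i(P) \ge -\delta p_i$ keeps the orthant invariant. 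Since the spectral bound (\ref{eq:mflocal}) is exactly condition (\ref{eq:spec_exact_mean}) with $c_f = f(1)$, Theorem~\ref{thm: vanishing condition for exact mean field model} gives $y_i(t) \to 0$ as $t \to \infty$ for every $i$ and every initial condition, hence $p_i(t) \to 0$ as well. Combined with the local asymptotic stability already furnished by Theorem~\ref{thm: local stability} under the same hypothesis, this establishes global asymptotic stability of $0$ for (\ref{eq:dynamical system})--(\ref{eq:gidef}).

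The one subtlety I would be careful about — and what I expect to be the only real obstacle — is that the comparison theorem as quoted (Theorem~\ref{thm: ODI system result}) presumes the reference system $\widetilde g$ is cooperative/quasimonotone, i.e. $\partial \widetilde g_i/\partial p_j \ge 0$ for $i \neq j$, so that one-sided bounds on a single component propagate correctly through the coupling. This holds here: $\partial \widetilde g_i/\partial p_j = \beta f(1) W_{ij}(1-p_i) \ge 0$ on $[0,1]^n$, so the hypotheses of Corollary~\ref{cor: ODI system result} are genuinely met, and no extra work is needed. I would also note in passing that concavity is used only through the elementary inequality $f(l) \le f(1)l$ on the integers, which is exactly the feature highlighted in the introduction as making the integer-argument nonlinearity tractable; no smoothness of $f$ is required.
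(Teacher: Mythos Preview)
Your proof is correct and takes a genuinely different route from the paper's. The paper argues via the symmetrized Jacobian: it invokes a criterion from \cite{GSAcriterion} that reduces global asymptotic stability to showing $\lambda\bigl((\nabla g(P))^{(S)}\bigr)<0$ for all $P\neq 0$, and then establishes the key pointwise bound $\sum_{l} f(l)\,\partial\Psi(h,l)/\partial p_j \le f(1)$ through three technical lemmas (an inclusion--exclusion expansion of $\Psi$, a rearrangement into the alternating sums $x_k$ of (\ref{eq:xkdef}), and a combinatorial argument that $x_k\le 0$ under concavity). Your argument bypasses all of this: from $f(l)\le f(1)\,l$ and the Poisson--binomial mean identity you get $g_i(P)\le \widetilde g_i(P)$ directly, and then the comparison principle together with Theorem~\ref{thm: vanishing condition for exact mean field model} and Theorem~\ref{thm: local stability} finishes. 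This is more elementary and cleanly reuses results already in the paper. What the paper's approach buys is a derivative-level estimate, $\sum_l f(l)\,\partial\Psi/\partial p_j \le c$, which is the template used immediately afterwards for Theorem~\ref{thm: global stability collective}; your integral-level bound $g_i\le\widetilde g_i$ would also work there (with $c_f=c_2/c_1$), so the payoff is mainly stylistic. Your remark about quasimonotonicity of $\widetilde g$ is well taken and indeed necessary for Corollary~\ref{cor: ODI system result} to apply; the paper's statement of Theorem~\ref{thm: ODI system result} is tacit on this point.
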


To prove this theorem, we first introduce a few preliminary results. Let $h\in E$ be a hyperedge. 
\new{To avoid cumbersome notation we assume that $|h|=K$ and let the nodes in $h$ be $\{1,2,\dots,K\}$. 
Any other hyperedge could be analyzed in similar way,
but, for example, in Lemma~\ref{lemma: rewrite sum of partials in terms of x_k} below we would then need to write 
$\{ 1, 2, \ldots, K\} \setminus \{i\}$ rather than 
$\{ 1, 2, \ldots, K-1\}$.
Also, to streamline the presentation, we use the 
additional notation $[K] = \{ 1, 2, \ldots, K\}$ where convenient.
}

We seek to estimate the spectrum of the Jacobian matrix of $g$ at all points $P\neq 0$, as in \cite[Theorem 6.4]{HdK21}.
Hence, from
(\ref{eq:jac1})--(\ref{eq:jac2}), 
we need to estimate $\sum_{l=1}^kf(l)\frac{\partial \Psi}{\partial p_K}(h,l)$. To this end, let us first rewrite $\Psi(h,l)$ according to the following lemma.

\begin{lemma}\label{lemma: bernouilli sum}
Let $\{z_i\}_{i=1}^K$ be a set of independent Bernoulli random variables  such that for each $i\in \{1,2,\dots,K\}$
$$
z_i=\begin{cases}1&\text{ with probability }p_i\\
0&\text{ with probability }1-p_i.
\end{cases}
$$
For every $l\in \{1,2,\dots,K\}$
$$
\mathbb P(\sum_{i=1}^Kz_i=l)=\sum_{J_l\subset [K]}\prod_{j\in J_l}p_j+\sum_{k=l+1}^K(-1)^{k-l}{l+k\choose l}\sum_{J_k\subset [K]}\prod_{j\in J_k}p_j,
$$
where $J_l$ runs over all possible subsets of $[K]:=\{1,2,\dots,K\}$ of size $l$.
\end{lemma}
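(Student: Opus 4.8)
The plan is to prove the identity by inclusion–exclusion. The probability $\mathbb P(\sum_{i=1}^K z_i = l)$ is, by independence, exactly $\sum_{J_l \subset [K]} \prod_{j \in J_l} p_j \prod_{j \in [K]\setminus J_l}(1-p_j)$, i.e.\ $\Psi(h,l)$ with $h = [K]$. The task is therefore to expand each product $\prod_{j \in [K]\setminus J_l}(1-p_j)$ and collect terms. First I would fix a subset $J_l$ of size $l$ and write $\prod_{j \in [K]\setminus J_l}(1-p_j) = \sum_{S \subset [K]\setminus J_l} (-1)^{|S|} \prod_{j \in S} p_j$. Substituting this into $\Psi(h,l)$ gives a double sum over $J_l$ (of size $l$) and $S$ (disjoint from $J_l$); the monomial $\prod_{j \in J_l \cup S} p_j$ appears, which is a squarefree monomial of degree $l + |S|$.

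Next I would reorganize the sum by the total index set $J_k := J_l \cup S$, which ranges over subsets of $[K]$ of size $k = l + |S| \ge l$. For a fixed $J_k$ of size $k$, the number of ways to split it as $J_l \sqcup S$ with $|J_l| = l$ is $\binom{k}{l}$, and each contributes the sign $(-1)^{|S|} = (-1)^{k-l}$. Hence $\Psi(h,l) = \sum_{k=l}^{K} (-1)^{k-l} \binom{k}{l} \sum_{J_k \subset [K]} \prod_{j \in J_k} p_j$. Separating off the $k = l$ term (where $\binom{l}{l} = 1$ and the sign is $+1$) yields exactly the claimed expression, provided the stated binomial coefficient $\binom{l+k}{l}$ in the lemma is read as $\binom{k}{l}$ — I would flag this as a typo in the statement, since the inclusion–exclusion count unambiguously produces $\binom{k}{l}$ (the number of $l$-subsets of a $k$-set), and this is what is actually needed in the subsequent Jacobian estimates.

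The computation is entirely routine; there is no serious obstacle. The only point requiring a little care is the bookkeeping when swapping the order of summation: one must check that every pair $(J_l, S)$ with $J_l$ of size $l$ and $S$ disjoint from $J_l$ corresponds to exactly one pair $(J_k, \text{choice of }J_l \subset J_k)$ with $k = |J_l| + |S|$, and conversely. This bijection is immediate once stated. I would present the proof as a short chain of displayed equalities: (i) write $\Psi(h,l)$ with the independence formula; (ii) expand the $(1-p_j)$ factors via inclusion–exclusion; (iii) interchange summation order and regroup by $J_k = J_l \cup S$; (iv) read off the binomial coefficient $\binom{k}{l}$ and split off the leading term. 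No additional lemmas are needed beyond the elementary expansion $\prod_j (1-p_j) = \sum_{S}(-1)^{|S|}\prod_{j\in S} p_j$.
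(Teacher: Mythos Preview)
Your argument is correct and follows essentially the same inclusion--exclusion route as the paper: the paper fixes $J_l$, expands $\mathbb P\bigl((\cap_{i\in J_l}A_i)\cap(\cap_{j\notin J_l}A_j^c)\bigr)$ via inclusion--exclusion on the complemented events, and then counts across all $J_l$, which is exactly your algebraic expansion of $\prod_{j\notin J_l}(1-p_j)$ phrased probabilistically. Your flag on the binomial coefficient is also correct: the paper's own concrete counts in the proof ($\binom{l+1}{l}$ for size-$(l{+}1)$ terms, $\binom{l+2}{l}$ for size-$(l{+}2)$ terms) confirm that the general coefficient should be $\binom{k}{l}$, not $\binom{l+k}{l}$, and the small check $K=2,\ l=1$ already rules out the latter.
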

\begin{proof}
Letting $A_i:=\{z_i=1\}$ for every $i\in\{1,2,\dots,K\},$ we have for $l\in \{1,2,\dots,K\}$
\begin{equation}\label{eqn: sum of bernouilli}
    \mathbb P(\sum_{i=1}^Kz_i=l)=\sum_{J_l\subset [K]}\mathbb P\left((\cap_{i\in J_l}A_i)\cap (\cap_{j\in [K]\setminus J_l} A_{j}^c)\right),
\end{equation}
where $A^c$ denotes the complement of event $A$.

Let us estimate $\mathbb P(A\cap A_{l+1}^c\cap\dots\cap A_K^c)$, where $A:=A_1\cap\dots\cap A_l$, thus considering without loss of generality the case $J_l=\{1,2,\dots,l\}$. Define the induced probability measure from $A$ by 
$$
\mathbb P_A(B):=\frac{\mathbb P(A\cap B )}{\mathbb P(A)}.
$$
By the inclusion-exclusion principle, we have
\begin{align*}
    &\mathbb P(A\cap A_{l+1}^c\cap\dots\cap A_K^c)\\
    &=\mathbb P(A)\mathbb P_A(A_{l+1}^c\cap\dots\cap A_K^c)\\
    &= \mathbb P(A)\left(1-\sum_{i=l+1}^K\mathbb P_A(A_i)+
    \right. \\
    &\mbox{}~~~~~~
    \left.
    \sum_{l+1\leq i< j\leq K}\mathbb P_A(A_i\cap A_j)+\dots+(-1)^{K-l}\mathbb P_A(A_{l+1}\cap\dots\cap A_K)\right)\\
    &=\mathbb P(A)-\sum_{i=l+1}^K\mathbb P(A\cap A_i)\\
    &\mbox{}~~~
    +\sum_{l+1\leq i< j\leq K}\mathbb P(A\cap A_i\cap A_j)+\dots+(-1)^{K-l}\mathbb P(A\cap A_{l+1}\cap\dots\cap A_K).
\end{align*}
Spanning over all $J_l$, we see that for each $i\in \{l+1,\dots,K\},$ $\mathbb P(A\cap A_i)$ is substracted in (\ref{eqn: sum of bernouilli}) exactly $l+1$ times. Indeed it is counted once  for each $J_l$ satisfying
\begin{align*}
&\exists\ i_0\in[K]\setminus J_l,\ (\cap_{j\in J_l}A_j)\cap A_{i_0}=A\cap A_i=A_1\cap \dots \cap A_l\cap A_i\\
\Leftrightarrow\ &\exists\ i_0\in[K]\setminus J_l,\ J_l\cup\{i_0\}=\{1,2,\dots,l,i\},
\end{align*} 
which yields ${l+1\choose l}=l+1$ possible choices for $J_l$. Likewise, for every $1\leq i<j\leq K$, $\mathbb P(A\cap A_i\cap A_j)$ is added exactly ${l+2\choose l}$ times in (\ref{eqn: sum of bernouilli}), and more generally every $(-1)^{k-l}\mathbb P(A\cap A_{j_{l+1}}\cap \dots A_{j_{k}})$ is added in (\ref{eqn: sum of bernouilli}) exactly ${l+k\choose l}$ times. This, together with the independence of the $z_i$, yields the claimed formula.
\end{proof}

Using Lemma~\ref{lemma: bernouilli sum}, we deduce the following lemma.
\begin{lemma}\label{lemma: rewrite sum of partials in terms of x_k}
We have
$$
\sum_{l=1}^Kf(l)\frac{\partial \Psi}{\partial p_K}(h,l)= f(1) +\sum_{k=2}^{K-1}x_k (\sum_{J_{k-1}\subset [K-1]}\prod_{j\in J_{k-1}}p_j),
$$
where, for $k\geq 2$, we let
\begin{equation}
x_k:=f(k)+\sum_{l=1}^{k-1}(-1)^{l}{2k-l\choose k-l}f(k-l).
\label{eq:xkdef}
\end{equation}
\end{lemma}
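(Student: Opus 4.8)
The plan is to write the left-hand side as a linear combination of the elementary symmetric sums $\sigma_r := \sum_{J_r\subset[K-1]}\prod_{j\in J_r}p_j$ of the $K-1$ variables $p_1,\dots,p_{K-1}$ (so $\sigma_0=1$, $\sigma_1 = p_1+\dots+p_{K-1}$, \dots, $\sigma_{K-1}=p_1\cdots p_{K-1}$), and then to identify the coefficient of $\sigma_{k-1}$ with $x_k$.

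First I would use the expression for $\partial\Psi(h,l)/\partial p_{j_0}$ already derived in the proof of Theorem~\ref{thm: local stability}. Taking $j_0=K$ and writing $\tilde h = h\setminus\{K\} = [K-1]$, and recalling that $\Psi(h,l)$ is the probability that exactly $l$ of the independent Bernoulli variables $z_1,\dots,z_K$ (with $\mathbb P(z_i=1)=p_i$) equal $1$, that expression reads simply $\partial\Psi(h,l)/\partial p_K = \Psi(\tilde h,l-1)-\Psi(\tilde h,l)$, where $\Psi(\tilde h,j)=\mathbb P\bigl(\sum_{i=1}^{K-1}z_i=j\bigr)$, and in particular $\Psi(\tilde h,0)=\prod_{i=1}^{K-1}(1-p_i)$ and $\Psi(\tilde h,K)=0$. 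Summing against $f(l)$ and performing an Abel (summation-by-parts) rearrangement, using $f(0)=0$ and $\Psi(\tilde h,K)=0$, gives
\[
\sum_{l=1}^{K}f(l)\,\frac{\partial\Psi}{\partial p_K}(h,l)=f(1)\,\Psi(\tilde h,0)+\sum_{l=1}^{K-1}\bigl(f(l+1)-f(l)\bigr)\,\Psi(\tilde h,l).
\]

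Next I would invoke Lemma~\ref{lemma: bernouilli sum} applied to the $K-1$ variables $z_1,\dots,z_{K-1}$ to rewrite each $\Psi(\tilde h,l)$, $l\ge 1$, as $\sigma_l$ plus the higher-order corrections $\sum_{k=l+1}^{K-1}(-1)^{k-l}\binom{l+k}{l}\sigma_k$, and expand $\Psi(\tilde h,0)=\prod_{i=1}^{K-1}(1-p_i)=\sum_{k=0}^{K-1}(-1)^k\sigma_k$. Substituting these into the displayed identity and interchanging the order of the two summations so as to group terms by $\sigma_r$, the coefficient of $\sigma_0$ is visibly $f(1)$, and the coefficient of $\sigma_r$ for $r\ge 1$ is a finite alternating combination of the values $f(1),\dots,f(r+1)$ with binomial weights.

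The last step — which I expect to be the only real obstacle — is to verify that this collected coefficient simplifies to the stated closed form, i.e.\ that it equals $x_{r+1}$. This is an elementary but somewhat delicate binomial-coefficient manipulation: after regrouping by the argument of $f$, the two consecutive binomial weights attached to each $f(m)$ combine via Pascal's rule, and the differences $f(l+1)-f(l)$ reorganise into a single alternating sum; one must keep careful track of the index ranges and of the boundary terms ($m=1$ and $m=r+1$). A worthwhile check is that the identity reduces correctly in the smallest nontrivial cases, e.g.\ $K=2$ and $K=3$, which also pins down the normalisation of the binomial coefficients in (\ref{eq:xkdef}).
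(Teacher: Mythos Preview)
Your approach is correct and takes a genuinely different route from the paper. The paper first applies Lemma~\ref{lemma: bernouilli sum} to $\Psi(h,l)$ on all $K$ variables, differentiates the resulting polynomial expansion term by term with respect to $p_K$ (each $\sum_{J_k\subset[K]}\prod p_j$ becoming $\sum_{J_{k-1}\subset[K-1]}\prod p_j$), and then regroups by the elementary symmetric sums over $[K-1]$. You instead use the probabilistic recurrence $\partial\Psi(h,l)/\partial p_K=\Psi(\tilde h,l-1)-\Psi(\tilde h,l)$ to pass immediately to quantities on $\tilde h=[K-1]$, perform an Abel summation, and only then invoke Lemma~\ref{lemma: bernouilli sum} on the $K-1$ variables. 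The paper's route is slightly more mechanical, since differentiating an explicit polynomial expansion is automatic; your route is more conceptual---the recurrence and the Abel step are natural and isolate the leading $f(1)$ term cleanly---at the price of introducing the first differences $f(l+1)-f(l)$, which must then be re-expanded to recover the coefficients $x_k$ in the stated form. In both arguments the final regrouping/identification of coefficients is asserted rather than written out in full, so your candid remark that this is ``the only real obstacle'' is accurate and not a shortcoming relative to the paper's proof; your suggestion to verify the small cases $K=2,3$ is a sensible safeguard for pinning down the binomial normalisations.
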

\begin{proof}
By Lemma~\ref{lemma: bernouilli sum}, we can rewrite $\Psi(h,l)$ as
\begin{equation}\label{estimated prob}
\Psi(h,l) = \sum_{J_l\subset [K]}\prod_{j\in J_l}p_j+\sum_{k=l+1}^K(-1)^{k-l}{l+k\choose l}\sum_{J_k\subset [K]}\prod_{j\in J_k}p_j.
\end{equation}

From $(\ref{estimated prob})$, we find for $K\geq 2$ (for $K=1$, the partial derivative is equal to $1$)
that $\partial \Psi(h,l)/\partial p_K $
takes the form 
\[
\begin{cases} 
1 +\sum_{k=1}^{K-1}(-1)^{k}{l+k+1\choose l}\sum_{J_k\subset [K-1]}\prod_{j\in J_k}p_j,&\text{ if }l=1\\
\sum_{J_{l-1}\subset [K-1]}\prod_{j\in J_{l-1}}p_j+\sum_{k=l}^{K-1}(-1)^{k-l+1}{l+k+1\choose l}\sum_{J_k\subset [K-1]}\prod_{j\in J_k}p_j,&\text{ otherwise.}
\end{cases}
\]
Multiplying the above by $f(l)$, summing over $l$ and grouping the terms according to each $\sum_{J_k\subset [K-1]}\prod_{j\in J_k}p_j$, we find
that
\[
\sum_{l=1}^Kf(l)\frac{\partial \Psi}{\partial p_K}(h,l)
\]
may be written 
\[
 f(1)\ +\ \sum_{k=2}^{K-1}(f(k)+\sum_{l=1}^{k-1}(-1)^{l}{2k-l\choose k-l}f(k-l))(\sum_{J_{k-1}\subset [K-1]}\prod_{j\in J_{k-1}}p_j).
\]
\end{proof}

\begin{lemma}\label{lemma: x_k negative}
Suppose that $f$ is concave, then $x_k\leq 0$ 
\new{in (\ref{eq:xkdef})} for all $k\geq 2.$
\end{lemma}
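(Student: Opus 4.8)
The plan is to exploit two structural facts: (i) by inspection of (\ref{eq:xkdef}), $x_k$ is a \emph{linear} functional of the vector $(f(1),f(2),\dots,f(k))$; and (ii) the functions $f\colon\{0,1,\dots,k\}\to\R$ that are concave, nonnegative (as an infection rate should be), and satisfy $f(0)=0$ form a finitely generated convex cone $\mathcal K_k\subset\R^k$ with coordinates $f(1),\dots,f(k)$. Since $x_k$ is linear and we only care about its sign, it suffices to verify $x_k\le 0$ at the extreme rays of $\mathcal K_k$.

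First I would pin down $\mathcal K_k$ and its extreme rays. Writing $\Delta^2 f(j)=f(j+2)-2f(j+1)+f(j)$, concavity is the system $\Delta^2 f(j)\le 0$ for $j=0,\dots,k-2$ (using $f(0)=0$ in the $j=0$ case), and, granted concavity and $f(0)=0$, the chord inequality $f(j)\ge\frac{j}{k}f(k)$ shows that $f(k)\ge 0$ already forces $f\ge 0$ on all of $\{0,\dots,k\}$. Hence $\mathcal K_k$ is cut out in $\R^k$ by the $k$ inequalities $\Delta^2 f(0)\le 0,\dots,\Delta^2 f(k-2)\le 0$ and $f(k)\ge 0$; these have linearly independent normals (a modified discrete-Laplacian matrix), so $\mathcal K_k$ is a simplicial cone with exactly $k$ extreme rays, obtained by saturating $k-1$ of the inequalities. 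Saturating all of $\Delta^2 f(0)=\dots=\Delta^2 f(k-2)=0$ yields the linear ramp $r(j)=j$; saturating $f(k)=0$ together with all second differences but one, $\Delta^2 f(i-1)<0$, yields, for each $i\in\{1,\dots,k-1\}$, the ``tent'' $t_i$ that is affine on $\{0,\dots,i\}$ and on $\{i,\dots,k\}$ with $t_i(0)=t_i(k)=0$; a convenient normalization is $t_i(j)=j(k-i)$ for $j\le i$ and $t_i(j)=i(k-j)$ for $j\ge i$. So it remains to check $x_k(r)\le 0$ and $x_k(t_i)\le 0$ for $i=1,\dots,k-1$.

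For the ramp, $x_k(r)=k+\sum_{l=1}^{k-1}(-1)^l(k-l)\binom{2k-l}{k-l}$; the identity $(k-l)\binom{2k-l}{k-l}=(k+1)\binom{2k-l}{k+1}$ rewrites this as $k+(k+1)\sum_{l=1}^{k-1}(-1)^l\binom{2k-l}{k+1}$, and re-indexing plus a hockey-stick/Vandermonde evaluation — together with the elementary fact that an alternating sum of a monotone sequence of binomials is controlled by its largest term, here $\binom{2k-2}{k}$ — shows the bracketed sum exceeds $k/(k+1)$, whence $x_k(r)\le -1<0$ for $k\ge 2$. For the tents, substituting the two-piece formula for $t_i$ and splitting the sum over $l$ at the peak $l=k-i$ gives, after simplification,
\[
x_k(t_i)=(k-i)\bigl(x_k(r)-k\bigr)+k\sum_{l=1}^{k-i-1}(-1)^l(i+l-k)\binom{2k-l}{k-l},
\]
on whose remaining range $i+l-k<0$; since $x_k(r)-k<0$ this is a large negative quantity plus $k$ times a residual binomial sum, and one must show the former dominates. (The case $i=k-1$ degenerates cleanly to $x_k(t_{k-1})=x_k(r)-k$.)

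The main obstacle is exactly this magnitude comparison for the tent functions: the residual sum is positive — it is dominated by its $l=1$ term $(k-i-1)\binom{2k-1}{k-1}$ — so bounding it sharply enough against $(k-i)\bigl(k-x_k(r)\bigr)$ needs the true order of $x_k(r)$ rather than the crude $x_k(r)\le -1$. I would handle it by the same alternating-binomial estimates used for the ramp, applied term-by-term to the residual sum, or alternatively by a downward induction on $i$ starting from the clean base case $i=k-1$; either way the remaining work is purely combinatorial bookkeeping with binomial coefficients, with no further conceptual input.
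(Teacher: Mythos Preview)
Your extreme-ray reduction is sound and genuinely different from the paper's argument: the paper never decomposes $f$, but instead asserts that the sequence $l\mapsto C_l f(k-l)$, with $C_l=\binom{2k-l}{k-l}$, is monotone decreasing for any concave $f\ge 0$ with $f(0)=0$, then pairs consecutive terms in the alternating sum and compares $f(k)$ against the final one or two terms. Your ramp case is fine (the alternating--Pascal bound $S\le -\binom{2k-2}{k}$ indeed gives $x_k(r)\le -1$).

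The gap is in the tent case, and it is not mere bookkeeping. Your decomposition $x_k(t_i)=(k-i)(x_k(r)-k)+kR_i$ is correct, but the crude bound $R_i\le(k-i-1)\binom{2k-1}{k-1}$ already fails at $k=4,\ i=1$: there $(k-i)\lvert x_k(r)-k\rvert=3\cdot 80=240$ while $k(k-i-1)\binom{2k-1}{k-1}=4\cdot 2\cdot 35=280$, so the comparison goes the wrong way. Sharpening the estimate on $x_k(r)$ cannot rescue this, since $\lvert x_4(r)-4\rvert=80$ is the exact value; and the sequence $i\mapsto x_k(t_i)$ is not monotone (for $k=5$ the values at $i=1,2,3,4$ are $-53,-136,-114,-372$), so a downward induction on $i$ based only on the sign of $x_k(t_{i+1})$ does not go through either.

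The clean fix is to drop the decomposition and notice that the tents are actually \emph{easier} than the ramp: since $t_i(k)=0$, one has $x_k(t_i)=-\sum_{l=1}^{k-1}(-1)^{l+1}C_l\,t_i(k-l)$, and it suffices that $C_l\,t_i(k-l)$ be decreasing in $l$. On the range $l\le k-i$ this quantity is $il\binom{2k-l}{k-l}$, on $l\ge k-i$ it is $(k-i)(k-l)\binom{2k-l}{k-l}$; one-line ratio checks show each piece is decreasing, and they agree at $l=k-i$, so the concatenation is decreasing and pairing gives $x_k(t_i)\le 0$. This monotonicity of $C_l f(k-l)$ is precisely the engine of the paper's own proof, so in the end the extreme-ray route rejoins theirs at the crucial step.
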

\begin{proof}
It is clear that $x_2\leq 0$, so it remains to show that $x_k\leq 0$ for all $k\geq 3$.
Letting $C_l:={2k-l\choose k-l}$ for fixed $k\geq 3$, we have
\begin{align*}
&\sum_{l=1}^{k-1}(-1)^lC_lf(k-l) =\\
&\begin{cases}
-\left((C_1f(k-1)-C_2f(k-2))+\dots+C_{k-1}f(1)\right), 
  &k\equiv 0 \ \mathrm{mod}\ 2\\
-\left((C_1f(k-1)-C_2f(k-2))+\dots+(C_{k-2}f(2)-C_{k-1}f(1))\right), &k\equiv 1\ \mathrm{ mod }\ 2.
\end{cases}
\end{align*}
Since $C_lf(k-l)$ is decreasing in $l$, it suffices to show, for all $k\geq 3$, that
\begin{align*}
    f(k)&\leq\begin{cases}
    C_{k-1}f(1)=(k+1)f(1),
    &k\equiv 0 \ \mathrm{mod}\ 2\\
    C_{k-2}f(2)-C_{k-1}f(1)={k+2\choose 2}f(2)-(k+1)f(1),
     &k\equiv 1 \ \mathrm{mod}\ 2.
    \end{cases}
\end{align*}

By the concavity of $f$ and $f(0)=0$, we see that the slopes $f(k)/k$ are decreasing in $k\geq 1$. Hence we already have that $f(k)\leq (k+1)f(1)$, and it remains to show that for all $k\geq 3$
$$
f(k)\leq {k+2\choose 2}f(2)-(k+1)f(1).
$$
Dividing both sides of the above inequality by $k$, we see that the LHS decreases in $k$, while the RHS increases in $k$; hence it suffices to show the inequality for $k=3$.
By the concavity of $f$,
$
f(3)\leq 2f(2)-f(1),
$
hence ${5\choose 2}f(2)-4f(1)\geq 4f(3)$.
\end{proof}
\begin{proof}[Proof of Theorem~\ref{thm: global stability concave}]
From the global asymptotic stability result in \cite[Lemma $1'$ ]{GSAcriterion} 
it is sufficient to show that 
all eigenvalues of the symmetric matrix 
\[
 (\nabla g(P))^{(S)}:= ( \nabla g(P) + \nabla g(P)^T)/2
\]
are strictly less than $0$, for all $P\neq 0$.

From (\ref{eq:jac1})--(\ref{eq:jac2}), 
using Lemma $6.3$ in \cite{HdK21} with diagonal matrix given by $\Lambda_{ii}:=\beta\sum_k\sum_{h\in \mathcal C_k}\mathcal I_{i h}\sum_{l=1}^kf(l)\Psi(h,l)\geq 0$, we deduce that
$$
\lambda(\nabla g(P)^{(S)})\leq \lambda(B^{(S)}-\delta I),
$$
where $B_{ij}:=\beta\sum_k\sum_{h\in \mathcal C_k}\mathcal I_{i h}\mathcal I_{j h}\sum_{l=1}^kf(l)\frac{\partial \Psi}{\partial p_{j}}(h,l)(1-p_i)$.

Since $f$ is concave, we know by Lemmas~\ref{lemma: rewrite sum of partials in terms of x_k} and \ref{lemma: x_k negative}, that for all $j\in \{1,2,\dots,n\}$, $\sum_{l=1}^Kf(l)\frac{\partial \Psi}{\partial p_j}(h,l)\leq f(1)$, 
from which it follows that
$$
0\leq B_{ij}^{(S)}\leq \beta f(1) \sum_{k=2}^{K-1}\sum_{h\in \mathcal C_k}\mathcal I_{ih}\mathcal I_{jh}=\beta f(1) W_{ij}.
$$
Hence $\lambda((\nabla g(P))^{(S)})\leq \lambda(\beta f(1)W-\delta I)$, and it suffices that $\lambda(W)\frac{f(1)\beta}{\delta}<1$, 
which completes the proof of Theorem~\ref{thm: global stability concave}.
\end{proof}
Applying Lemmas~\ref{lemma: rewrite sum of partials in terms of x_k} and \ref{lemma: x_k negative} to the identity function (which is concave), we find
$$
\sum_{l=1}^Kl\frac{\partial \Psi(h,l)}{\partial p_K}\leq 1.
$$
Hence for all choices of $f$, if $c_f>0$ is such that for all $x\in \N$, $f(x)\leq c_f x$, then 
$$
\sum_{l=1}^{K}f(l)\frac{\partial \Psi(h,l)}{\partial p_K}\leq c_f \sum_{l=1}^Kl\frac{\partial \Psi(h,l)}{\partial p_K}\leq c_f.
$$
In particular for a collective infection model, where $f(x):=c_2\1(x\geq c_1)$, we deduce that
$$
\sum_{l=1}^{K}f(l)\frac{\partial \Psi(h,l)}{\partial p_K}\leq\frac{c_2}{c_1},
$$
from which the next theorem follows.

\begin{thm}[Global asymptotic stability for a collective contagion model]\label{thm: global stability collective}
Suppose that $f(x):=c_2\1(x\geq c_1).$ If 
\begin{equation}
\frac{ \beta \, c_2 \, \lambda(W)} { \delta \, c_1 } < 1,
\label{eq:gscoll}
\end{equation}
then $0\in \R^n$ is globally asymptotically stable for 
(\ref{eq:dynamical system})--(\ref{eq:gidef}).
\end{thm}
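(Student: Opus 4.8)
The plan is to mirror the proof of Theorem~\ref{thm: global stability concave} almost verbatim, replacing the concavity estimate $\sum_{l=1}^{K} f(l)\,\partial\Psi(h,l)/\partial p_j \le f(1)$ used there by the bound $\sum_{l=1}^{K} f(l)\,\partial\Psi(h,l)/\partial p_j \le c_2/c_1$ that is established in the paragraph immediately preceding the statement. Since $g(0)=0$ in (\ref{eq:dynamical system})--(\ref{eq:gidef}), the global asymptotic stability criterion of \cite[Lemma~$1'$]{GSAcriterion} reduces the claim to showing that every eigenvalue of the symmetric matrix $(\nabla g(P))^{(S)} := (\nabla g(P)+\nabla g(P)^{T})/2$ is strictly negative for every admissible $P \ne 0$, i.e.\ with $0 \le p_i \le 1$.

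Next, exactly as in the concave case, I would apply Lemma~$6.3$ of \cite{HdK21} with the non-negative diagonal matrix $\Lambda_{ii} := \beta \sum_k \sum_{h \in \mathcal C_k} \mathcal I_{ih} \sum_{l=1}^{k} f(l)\Psi(h,l) \ge 0$ extracted from (\ref{eq:jac1})--(\ref{eq:jac2}), to obtain $\lambda\bigl((\nabla g(P))^{(S)}\bigr) \le \lambda(B^{(S)}-\delta I)$, where $B_{ij} := \beta \sum_k \sum_{h \in \mathcal C_k} \mathcal I_{ih}\mathcal I_{jh} \bigl( \sum_{l=1}^{k} f(l)\,\partial\Psi(h,l)/\partial p_j \bigr)(1-p_i)$.

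The heart of the argument is then the two-sided estimate $0 \le \sum_{l=1}^{k} f(l)\,\partial\Psi(h,l)/\partial p_j \le c_2/c_1$. The upper bound is precisely the displayed inequality established before the theorem: since $f(x)=c_2\1(x \ge c_1) \le (c_2/c_1)\,x$ for every $x \in \N$, pointwise domination of $f$ by a multiple of the identity function, together with the identity-function instance of Lemmas~\ref{lemma: rewrite sum of partials in terms of x_k} and \ref{lemma: x_k negative} (which gives $\sum_{l=1}^{K} l\,\partial\Psi(h,l)/\partial p_K \le 1$), yields $\sum_{l=1}^{K} f(l)\,\partial\Psi(h,l)/\partial p_K \le c_2/c_1$. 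For the lower bound I would insert the explicit expression for $\partial\Psi(h,l)/\partial p_{j}$ derived in the proof of Theorem~\ref{thm: local stability} and sum by parts, obtaining $\sum_{l=1}^{k} f(l)\,\partial\Psi(h,l)/\partial p_j = \mathcal I_{jh}\sum_{l \ge 0}\bigl(f(l+1)-f(l)\bigr)\,\PP(S=l)$, where $S$ counts the infected nodes of $h \setminus \{j\}$ in the independence model; as $f$ is nondecreasing, every forward difference is $\ge 0$, so this sum is $\ge 0$. Combined with $1-p_i \ge 0$, this gives $0 \le B^{(S)}_{ij} \le \beta(c_2/c_1)\,W_{ij}$ entrywise.

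Finally, since $B^{(S)}$ and $\beta(c_2/c_1)W$ are both symmetric with non-negative entries and the former is dominated entrywise by the latter, monotonicity of the largest eigenvalue for such matrices gives $\lambda(B^{(S)}) \le \beta(c_2/c_1)\,\lambda(W)$, hence $\lambda\bigl((\nabla g(P))^{(S)}\bigr) \le \beta(c_2/c_1)\,\lambda(W) - \delta$, which is strictly negative exactly when (\ref{eq:gscoll}) holds. I expect the only genuinely new point to be the upper estimate $\sum_{l=1}^{k} f(l)\,\partial\Psi(h,l)/\partial p_j \le c_2/c_1$: the step function is not concave, so Lemma~\ref{lemma: x_k negative} cannot be applied to $f$ directly, and the remedy is the linear majorant $x \mapsto (c_2/c_1)x$ together with one application of the concave result to the identity. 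A minor bookkeeping issue is to track $\lceil c_1 \rceil$ in place of $c_1$ when $c_1$ is not an integer.
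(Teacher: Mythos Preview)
Your plan is exactly the paper's: it states that ``the proof of Theorem~\ref{thm: global stability concave} above may be used to establish this result, substituting $f(1)$ by $c_2/c_1$ everywhere,'' relying on the displayed bound $\sum_{l} f(l)\,\partial\Psi(h,l)/\partial p_K\le c_2/c_1$ from the paragraph immediately preceding the theorem. The use of \cite[Lemma~$1'$]{GSAcriterion}, the decomposition $\nabla g=B-\Lambda-\delta I$, and the entrywise comparison $0\le B^{(S)}_{ij}\le \beta(c_2/c_1)W_{ij}$ all match. Your summation-by-parts justification of the lower bound $B_{ij}\ge 0$ is a welcome addition that the paper leaves implicit.

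There is, however, a genuine gap in the key upper bound---and your own Abel-summation identity exposes it. You obtain $\sum_l f(l)\,\partial\Psi(h,l)/\partial p_j=\sum_{m\ge 0}\bigl(f(m+1)-f(m)\bigr)\PP(S=m)$; for $f(x)=c_2\1(x\ge c_1)$ this equals $c_2\,\PP(S=c_1-1)$, and nothing forces $\PP(S=c_1-1)\le 1/c_1$. Concretely, for a hyperedge of size~$2$ with $c_1=2$ the sum is $c_2\,p_{j'}$ (where $\{j'\}=h\setminus\{j\}$), which exceeds $c_2/c_1=c_2/2$ whenever $p_{j'}>1/2$. The inference ``$f(l)\le(c_2/c_1)l$ hence $\sum_l f(l)\,\partial\Psi\le(c_2/c_1)\sum_l l\,\partial\Psi$'' fails precisely because the weights $\partial\Psi(h,l)/\partial p_j=q_{l-1}-q_l$ are not all nonnegative; the paper's displayed chain of inequalities shares this issue. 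A clean repair bypasses the Jacobian route altogether: since $\Psi(h,l)\ge 0$, the pointwise bound $f(l)\le(c_2/c_1)l$ \emph{does} give $\sum_l f(l)\Psi(h,l)\le(c_2/c_1)\sum_{j\in h}p_j$, so $g_i(P)\le\widetilde g_i(P)$ componentwise with $\widetilde g$ as in (\ref{eq:tildegidef}) and $c_f=c_2/c_1$; since $\widetilde g$ is cooperative, Corollary~\ref{cor: ODI system result} together with the global stability of (\ref{eq:concave upper bound dynamical system}) under (\ref{eq:gscoll}) then yields the conclusion.
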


The proof of Theorem~\ref{thm: global stability concave} above may be used to establish this result, substituting $f(1)$ by $c_2/c_1$ everywhere.


\section{Comparison with Alternative Mean Field Model and Exact Model}
\label{sec:compare}
As mentioned in section~\ref{sec:mod}, 
an alternative mean field approximation model was 
introduced and studied in \cite{HdK21}. This is given by
\begin{equation}\label{eq:alternative dynamical system}
\frac{d P(t)}{dt}=\widehat{g}(P(t)),
\end{equation}
where $\widehat{g}_i:\R^n\to \R$ is defined by
\begin{equation}
\widehat{g}_i(P(t)):=\beta\sum_{h\in E}\mathcal I_{i h}f\left(\sum_{j=1}^{n}p_j(t)\mathcal I_{j h}\right)(1-p_i(t))-\delta p_i(t).
\label{eq:alternative gidef}
\end{equation}
\new{The key approximation in the derivation of this model is to take
the expectation operation 
inside the function $f$. Comparing 
(\ref{eq:dynamical system})--(\ref{eq:gidef}) and
(\ref{eq:alternative dynamical system})--(\ref{eq:alternative gidef}),
}
 one major difference is that while the infection function $f$ is only \new{evaluated} over integers in $g$, it is \new{evaluated} on a continuous domain in $\widehat{g}$. 
 \new{This leads to different factors in the 
 spectral bounds.}
 Indeed, suppose that $f$ is concave.
 \new{Theorem~\ref{thm: global stability concave} tells us that 
 the solution of mean field approximation model given by $g$ 
 in (\ref{eq:dynamical system})--(\ref{eq:gidef}).
  vanishes if $ \beta \, f(1) \, \lambda(W) /\delta<1$.
  For the model defined by $\widehat{g}$ in
  (\ref{eq:alternative dynamical system})--(\ref{eq:alternative gidef}), \cite[Theorem~6.4]{HdK21} 
  gives the condition 
  \begin{equation}\label{eq:mflocal2}
  \frac{\beta \, f'(0) \, \lambda(W) } { \delta } <1
  \end{equation}
  \new{for global asymptotic stability.}
 In this concave setting, the slopes $x\mapsto (f(x)-f(0))/(x-0)$ are decreasing in $x>0$. Since $f(0)=0$, we deduce that $f'(0)=\lim_{x\to 0} f(x)/x \geq f(1)$ always holds true. 
 Hence, for the mean field model 
  (\ref{eq:dynamical system})--(\ref{eq:gidef})
  we have 
 \emph{a less restrictive sufficient condition} for vanishing of the disease. 
 Moreover, the following theorem shows that a similar condition 
 controls the behavior of the exact solution, and hence, in this sense,
   (\ref{eq:dynamical system})--(\ref{eq:gidef})
   gives a more accurate approximation
   than 
   (\ref{eq:alternative dynamical system})--(\ref{eq:alternative gidef})
   in the concave case.}
   
   \begin{thm}\label{thm:exact_concave_decay}
Suppose that $f$ is concave in the \new{mean field} model given by (\ref{eq:dynamical system})--(\ref{eq:gidef}). 
\new{
Also assume for simplicity that each node has the same,
independent, initial infection probability denoted 
by $i_0$; that is, for $j = 1,2,\ldots,n$,
\begin{equation} 
\PP( X_j(0)  = 1) = i_0.
\label{eq:izero}
\end{equation}
}
Then
$$
\mathbb P\left(\sum_{i=1}^nX_i(t)>0\right)\leq n \,
i_0 \, \exp \left( (\beta \, f(1) \, \lambda(W)-\delta)t \right).
$$
Hence, if $ \beta \,  f(1) \, \lambda(W) / \delta <1$ the disease vanishes at an exponential rate.
\end{thm}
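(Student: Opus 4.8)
The plan is to dominate the survival probability by a first moment and then feed that moment into the comparison machinery of section~\ref{sec:exact}. Write $p_i(t):=\E[X_i(t)]$ for the \emph{exact} expected process, which satisfies (\ref{eq: exact mean field model}) without approximation; the only feature of (\ref{eq:izero}) I need is the marginal identity $p_i(0)=\PP(X_i(0)=1)=i_0$ for every $i$. Since $\sum_{i=1}^n X_i(t)$ is a nonnegative integer, Markov's inequality (the first-moment method) gives
\[
\PP\Big(\sum_{i=1}^n X_i(t)>0\Big)\le \E\Big[\sum_{i=1}^n X_i(t)\Big]=\sum_{i=1}^n p_i(t),
\]
so it is enough to prove $\sum_{i=1}^n p_i(t)\le n\, i_0\, e^{(\beta f(1)\lambda(W)-\delta)t}$.

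Next I would linearize. Because $f$ is concave with $f(0)=0$, the slopes $f(x)/x$ are nonincreasing in $x\ge 1$, so $c_f=f(1)$; the differential inequality (\ref{eq: ODI}) therefore reads $\dot p_i\le \beta f(1)\sum_j W_{ij}p_j(1-p_i)-\delta p_i$, and since $0\le p_i\le 1$ while $W$ and $P$ have nonnegative entries we may discard the factor $(1-p_i)$ to obtain the \emph{linear} differential inequality
\[
\dot p_i(t)\le \beta f(1)\sum_{j=1}^n W_{ij}p_j(t)-\delta p_i(t),\qquad i=1,\dots,n.
\]
Let $Q(t)$ solve the companion linear ODE $\dot Q=(\beta f(1)W-\delta I)Q$ with $Q(0)=i_0\mathbf 1$, where $\mathbf 1$ denotes the all-ones vector. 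The vector field $P\mapsto(\beta f(1)W-\delta I)P$ has nonnegative off-diagonal coefficients, hence is quasi-monotone, so Corollary~\ref{cor: ODI system result} applies and yields $p_i(t)\le Q_i(t)$ for all $i$ and all $t\ge 0$.

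It remains to evaluate $\sum_i Q_i(t)$. Writing $Q(t)=e^{-\delta t}e^{\beta f(1)Wt}(i_0\mathbf 1)$, we get $\sum_{i=1}^n Q_i(t)=i_0\, e^{-\delta t}\, \mathbf 1^{\top}e^{\beta f(1)Wt}\mathbf 1$. Diagonalizing the symmetric matrix $W$ in an orthonormal eigenbasis and using $\beta f(1)\ge 0$, each eigen-mode contributes a factor $e^{\beta f(1)\mu t}\le e^{\beta f(1)\lambda(W)t}$, so $\mathbf 1^{\top}e^{\beta f(1)Wt}\mathbf 1\le e^{\beta f(1)\lambda(W)t}\|\mathbf 1\|_2^2=n\,e^{\beta f(1)\lambda(W)t}$. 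Combining the three displayed bounds gives precisely $\PP(\sum_i X_i(t)>0)\le n\, i_0\, e^{(\beta f(1)\lambda(W)-\delta)t}$, which decays to $0$ exponentially exactly when $\beta f(1)\lambda(W)/\delta<1$.

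The only step requiring real care is the comparison one: I must check that after discarding the $(1-p_i)$ factors the resulting linear field still majorizes the exact expected process, which hinges on the off-diagonal sign pattern of $\beta f(1)W-\delta I$ so that Corollary~\ref{cor: ODI system result} is legitimately invoked; the rest is the first-moment bound and a one-line spectral estimate, both routine.
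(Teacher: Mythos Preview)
Your argument is correct and is essentially the approach the paper has in mind: the paper simply points to the proof of \cite[Theorem~8.1]{HdK21} with $f'(0)$ replaced by $f(1)$, and that argument is precisely the first-moment bound $\PP(\sum_i X_i(t)>0)\le\sum_i p_i(t)$, the linear majorant obtained from (\ref{eq: ODI}) after dropping the $(1-p_i)$ factor, and the spectral estimate on $e^{(\beta f(1)W-\delta I)t}$. Your remark that only the marginal condition $p_i(0)=i_0$ is used (not independence) and your check of quasi-monotonicity for the comparison step are both apt and make the write-up slightly more careful than the paper's one-line deferral.
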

\begin{proof}
 This result may be proved using the arguments in the proof of   \cite[Theorem $8.1$]{HdK21}, noticing that we can substitute $f'(0)$ by $f(1)$. 
\end{proof}


The following corollary also holds, analogously to \cite{epidemicsSpread} and \cite[Corollary $8.2$]{HdK21}, 
\new{where
$f'(0)$ is again replaced by $f(1)$}. 

\begin{corollary}
Suppose $f$ is concave in the \new{mean field} model given by (\ref{eq:dynamical system})--(\ref{eq:gidef}).
 Let $\tau$ denote the time of extinction of the disease and suppose  
 $\beta \, f(1) \, \lambda(W) / \delta <1$. Then
 \[
 \E[\tau]\leq\frac{1+\log n}{\delta- \beta \, f(1) \, \lambda (W)}.
 \]
\end{corollary}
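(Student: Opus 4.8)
The plan is to derive this as a direct consequence of Theorem~\ref{thm:exact_concave_decay} together with the elementary identity $\E[\tau]=\int_0^\infty \PP(\tau>t)\,dt$ for a nonnegative random variable, mirroring \cite[Corollary~8.2]{HdK21} and the classical graph estimate in \cite{epidemicsSpread}. The first step is to observe that the disease-free configuration is absorbing for the exact process $X(t)$: if $\sum_{i=1}^n X_i(t)=0$, then every argument passed to $f$ in (\ref{eq:Xinf}) is $0$, and since $f(0)=0$ we get $\lambda_i(X(t))=0$ for all $i$, so no further infection can occur. Hence the extinction time $\tau=\inf\{t\ge 0:\sum_{i=1}^n X_i(t)=0\}$ satisfies $\{\tau>t\}=\{\sum_{i=1}^n X_i(t)>0\}$ for every $t\ge 0$, and Theorem~\ref{thm:exact_concave_decay} (bounding $n\,i_0\le n$, i.e.\ taking the worst case $i_0=1$) gives
\[
\PP(\tau>t)\ \le\ \min\bigl\{1,\ n\,e^{-at}\bigr\},\qquad a:=\delta-\beta\,f(1)\,\lambda(W)>0,
\]
where $a>0$ by hypothesis.

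Next I would evaluate $\E[\tau]=\int_0^\infty \PP(\tau>t)\,dt$ by splitting the range at the time $t^\star:=(\log n)/a\ge 0$ at which the exponential envelope $n e^{-at}$ first falls to $1$. On $[0,t^\star]$ we bound the integrand by $1$, contributing $t^\star=(\log n)/a$; on $[t^\star,\infty)$ we bound it by $n e^{-at}$, contributing $\int_{t^\star}^\infty n e^{-at}\,dt = n e^{-at^\star}/a = 1/a$. Summing the two contributions yields
\[
\E[\tau]\ \le\ \frac{\log n}{a}+\frac{1}{a}\ =\ \frac{1+\log n}{\delta-\beta\,f(1)\,\lambda(W)},
\]
which is exactly the claimed inequality.

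I do not expect a genuine obstacle here; the argument is routine bookkeeping once Theorem~\ref{thm:exact_concave_decay} is in hand. The only point that deserves an explicit word of justification is the absorbing-state observation, which is what lets us identify $\{\tau>t\}$ with $\{\sum_i X_i(t)>0\}$ (and, incidentally, guarantees $\E[\tau]<\infty$, so that $\tau$ is almost surely finite); after that, the splitting of the tail integral at $t^\star$ and the trivial check that $t^\star\ge 0$ for $n\ge 1$ complete the proof.
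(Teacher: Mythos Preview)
Your argument is correct and is exactly the classical tail-integral argument of \cite{epidemicsSpread} (and \cite[Corollary~8.2]{HdK21}) that the paper points to in lieu of a proof, with $f'(0)$ replaced by $f(1)$. The paper gives no additional details beyond that reference, so your write-up is in fact more complete than the paper's own treatment.
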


\section{Computational Experiments}
\label{sec:compute}
\new{In this section we report on results of computational experiments
that allow us to test the sharpness of the results derived in  section~\ref{sec:mf_analysis}, 
and also allow us to 
compare the two mean field models that we have discussed
against each other and against the exact stochastic model.}

\subsection{Simulation algorithm}
First, let us summarize our approach for the mean field approximation (\ref{eq:dynamical system})--(\ref{eq:gidef}). Following \cite{APM20}, we use the discrete Fourier representation of $\Psi(h,l)$ derived in \cite{APM20} to render the computation of (\ref{eq:gidef}) more 
stable.  
We solve the ODE systems 
(\ref{eq:dynamical system})--(\ref{eq:gidef})
and
   (\ref{eq:alternative dynamical system})--(\ref{eq:alternative gidef})
   with Euler's method, using a time step $\Delta t=0.05$. 
   For the exact stochastic model, we use the discretization 
   approach described in \cite{HdK21}.
   The number of nodes is chosen to be $n=400$, and hyperedges of prescribed sizes are generated independently by choosing nodes uniformly at random. In Figure~\ref{fig: atan}, \ref{fig: log}, \ref{fig: min}, \ref{fig: collective}, and \ref{fig: atan and 2log i_infty vs beta}, there are $400$ edges, $200$ hyperedges of size $3$, $100$ hyperedges of size $4$ and $50$ hyperedges of size $50$. The sizes and number of hyperedges differ in Figure~\ref{fig: i_0 discontinuity k>=3}, \ref{fig: i_0 discontinuity k>=4 more hyperedges} and \ref{fig: i_0 discontinuity k>=4 fewer hyperedges}, and are specified in the descriptions of the figures.
\subsection{Experimental Comparisons}

\new{In Figures~\ref{fig: atan},  \ref{fig: log}, \ref{fig: min},
\ref{fig: collective}
and
\ref{fig: atan and 2log i_infty vs beta}}, we compare the time evolution of the two mean field models (\ref{eq:dynamical system})--(\ref{eq:gidef}) and (\ref{eq:alternative dynamical system})--(\ref{eq:alternative gidef}), with the exact model. 
\new{The figures show the proportion of infected individuals:
$\sum_{j=1}^{n} X_j(t)/n$ for the exact model and 
$\sum_{j=1}^{n} p_j(t)/n$ for the mean field models.}
The exact model was run $100$ times independently. \new{The solid green envelopes represent the span of the runs: at each time point we discard the most extreme $10\%$ of the values; that is, $5\%$ of the values above and below the average. 
In these plots, we used the same initial infection probability
$i_0$ for each node, as in 
(\ref{eq:izero}).
The figures give results for different
$i_0$ and
$\beta$ values.}

\new{Figures~\ref{fig: atan} and \ref{fig: log}
use concave infection rates of $\mathrm{arctan}(x)$
and 
$\log(1+x)$, respectively.
Here, both mean field models are seen to 
give good qualitative approximations to the exact models, but it is  noticeable that the model (\ref{eq:alternative dynamical system})--(\ref{eq:alternative gidef})
(red dashed line), which 
applies continuous-valued arguments to $f$, 
overestimates the infection level when $\beta$ and $i_0$ are small and hence the disease vanishes over time.}

\new{Figure~\ref{fig: min} uses another concave infection rate, $f(x) = \min\{3,x\}$. Here, 
both mean field models substantially overestimate the
infection level for small $\beta$ and $i_0$.
It is intuitively reasonable that the two mean field models
behave similarly in this example, since
on hyperedges of size less than or equal to $4$
the infection rate function is linear, and hence commutes with the expectation operation.}

In \new{Figure~\ref{fig: collective}}, we consider a partitioned collective contagion model defined as follows. Letting $f_k$ denote the infection rate function applied to all hyperedges of size $k+1$, we let $f_1:x\mapsto x$, and $f_k:x\mapsto c_{2,k}\1(x\geq c_{1,k})$, $k\in \{2,\dots,4\}$. Here we chose $c_{1,k}=c_{2,k}:=k-1$, for $k\in \{2,\dots,4\}$.
\new{
In this case, the 
 mean field model (\ref{eq:dynamical system})--(\ref{eq:gidef})
(purple dots) fails to predict decay of the disease for small
$\beta$ and $i_0$.}

\new{In Figure~\ref{fig: atan and 2log i_infty vs beta} 
we directly compare the accuracy with which the mean field models predict disease outbreak, as a function of $\beta$, and  
we also test the sharpness of the 
spectral bounds.
Here we use the concave infection rates
$2\log(1+x)$ and $\mathrm{arctan}$. The vanishing conditions predicted by the spectral bounds (\ref{eq:mflocal}) and (\ref{eq:mflocal2}), yielding the green vertical lines in Figure~\ref{fig: atan and 2log i_infty vs beta}, occur at $\beta_1\cong 0.0369$ and $\beta_2\cong 0.0268$ respectively for $f(x)= 2\log(1+x)$, and at $\beta_1\cong 0.0629$ and $\beta_2\cong 0.0494$ respectively, for $f(x)=\arctan(x)$.
With initial infection probability $i_0 = 0.5$
we averaged the infection level at $T = 200$
over $10$ runs.
Blue crosses correspond to the exact model.
We see both mean field models are conservative in the sense that 
they give growth for $\beta$ values where the exact model
produces no infection. The figures also show the spectral 
bounds 
on $\beta$ arising from 
(\ref{eq:mflocal})
and
(\ref{eq:mflocal2})
as vertical lines, and we see that 
they give sharp predictions.
}

\begin{figure}[htp]
    \centering
    \includegraphics[width=\textwidth]{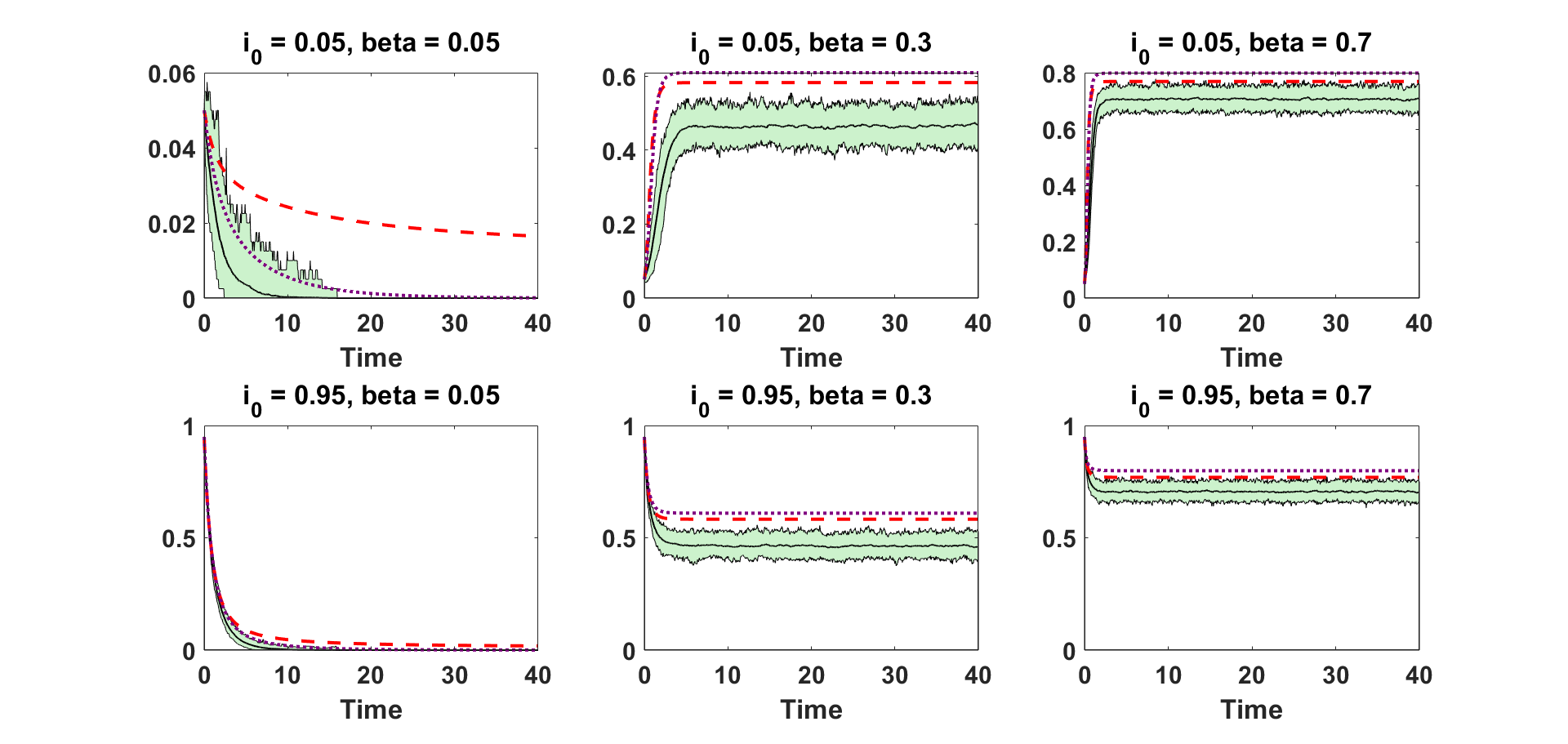}
    \caption{\new{Infection function $f(x) = \mathrm{arctan}(x)$.}
    Purple dots: mean field approximation from (\ref{eq:dynamical system})--(\ref{eq:gidef}).
    Red dashed line: mean field approximation from 
   (\ref{eq:alternative dynamical system})--(\ref{eq:alternative gidef}). 
    Black solid line: mean of the individual-level stochastic model. 
    }
    \label{fig: atan}
\end{figure}


\begin{figure}[htp]
    \centering
    \includegraphics[width=\textwidth]{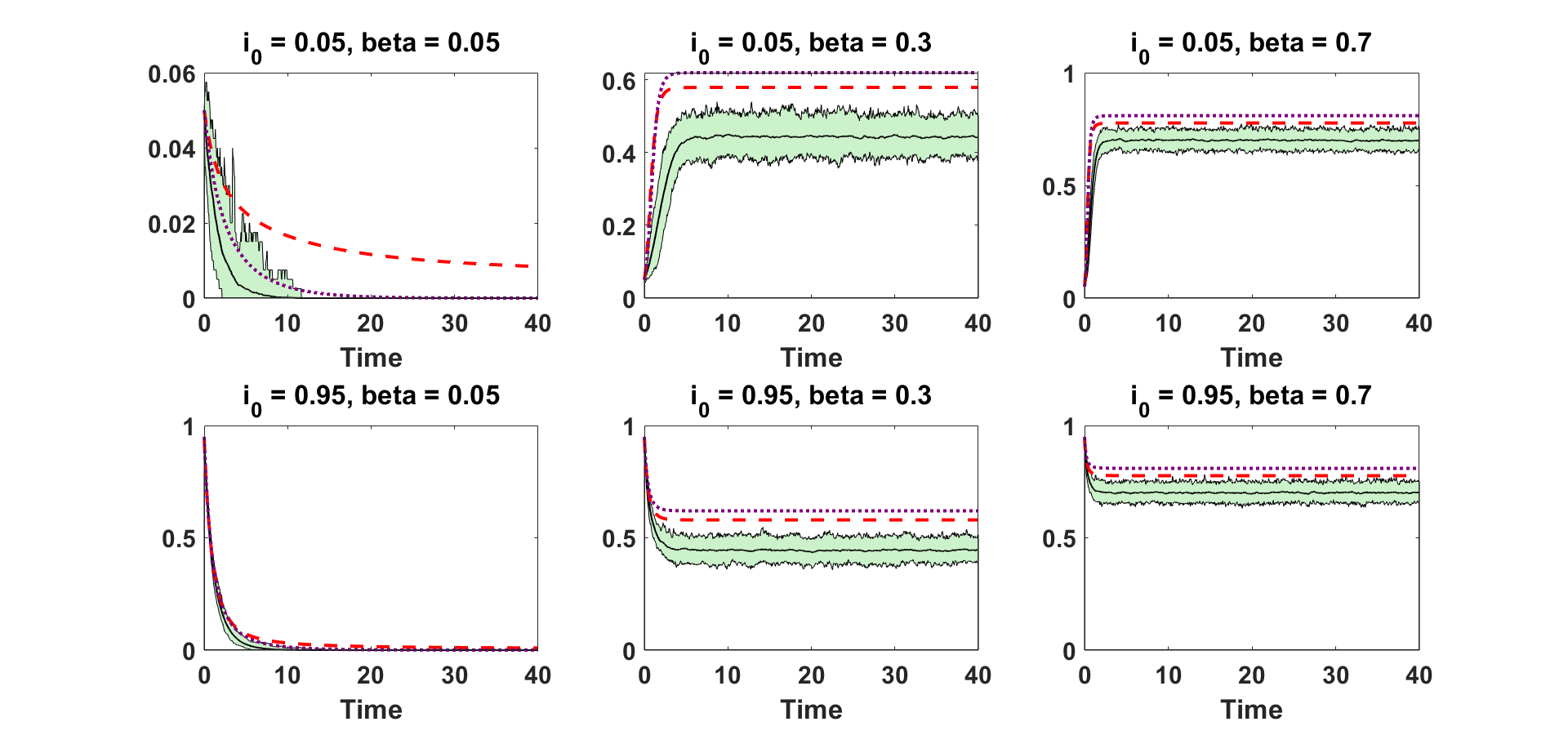}
    \caption{\new{Infection function $f(x) = \log(1+x)$.}
  Purple dots: mean field approximation from (\ref{eq:dynamical system})--(\ref{eq:gidef}).
    Red dashed line: mean field approximation from 
   (\ref{eq:alternative dynamical system})--(\ref{eq:alternative gidef}). 
    Black solid line: mean of the individual-level stochastic model. 
    }
    \label{fig: log}
\end{figure}

\begin{figure}[htp]
    \centering
    \includegraphics[width=\textwidth]{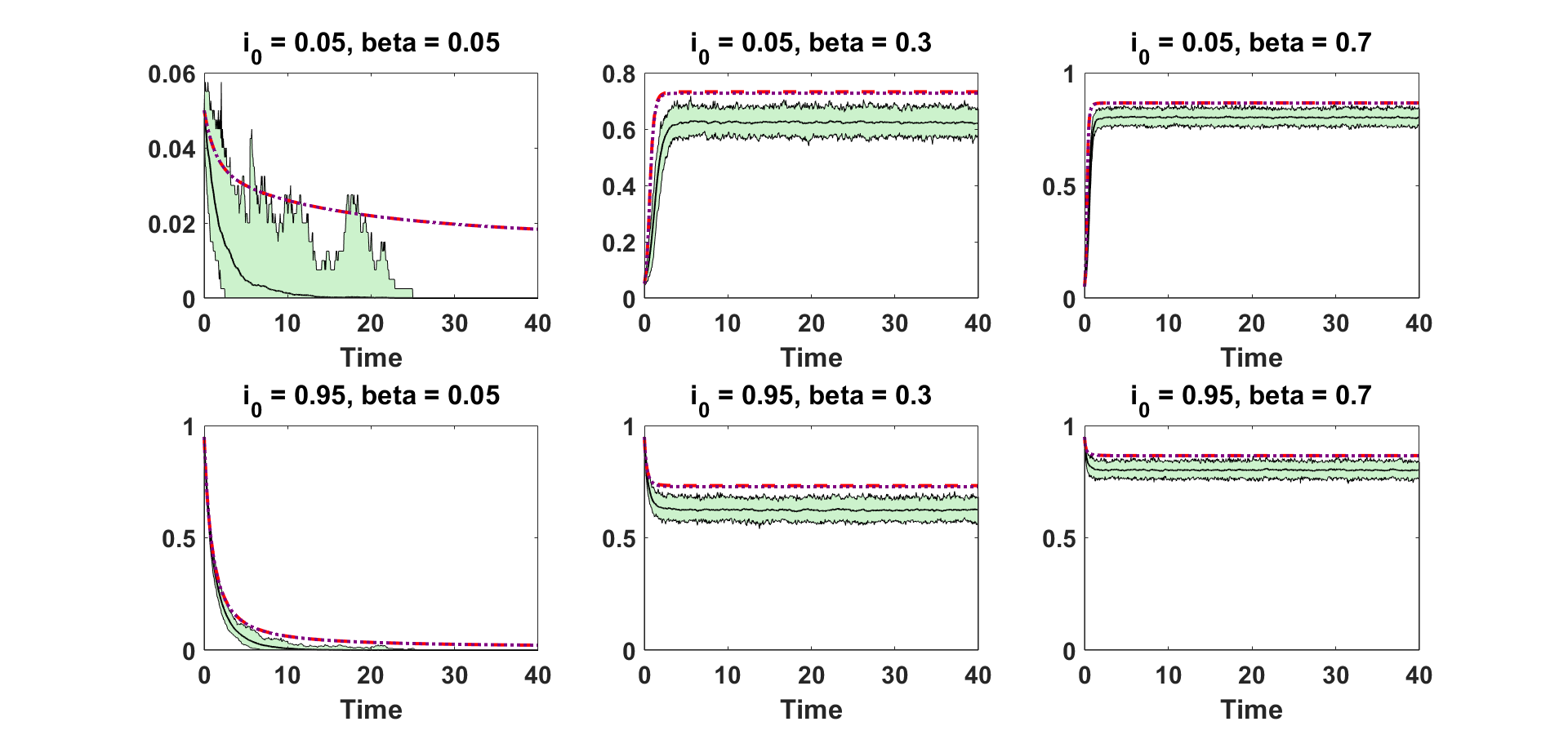}
    \caption{\new{Infection function $f(x) = \min\{3,x\}$.}
    Purple dots: mean field approximation from (\ref{eq:dynamical system})--(\ref{eq:gidef}).
    Red dashed line: mean field approximation from (\ref{eq:alternative dynamical system})--(\ref{eq:alternative gidef}).
    Black solid line: mean of the individual-level stochastic model. 
    }
    \label{fig: min}
\end{figure}

\begin{figure}[htp]
    \centering
    \includegraphics[width=\textwidth]{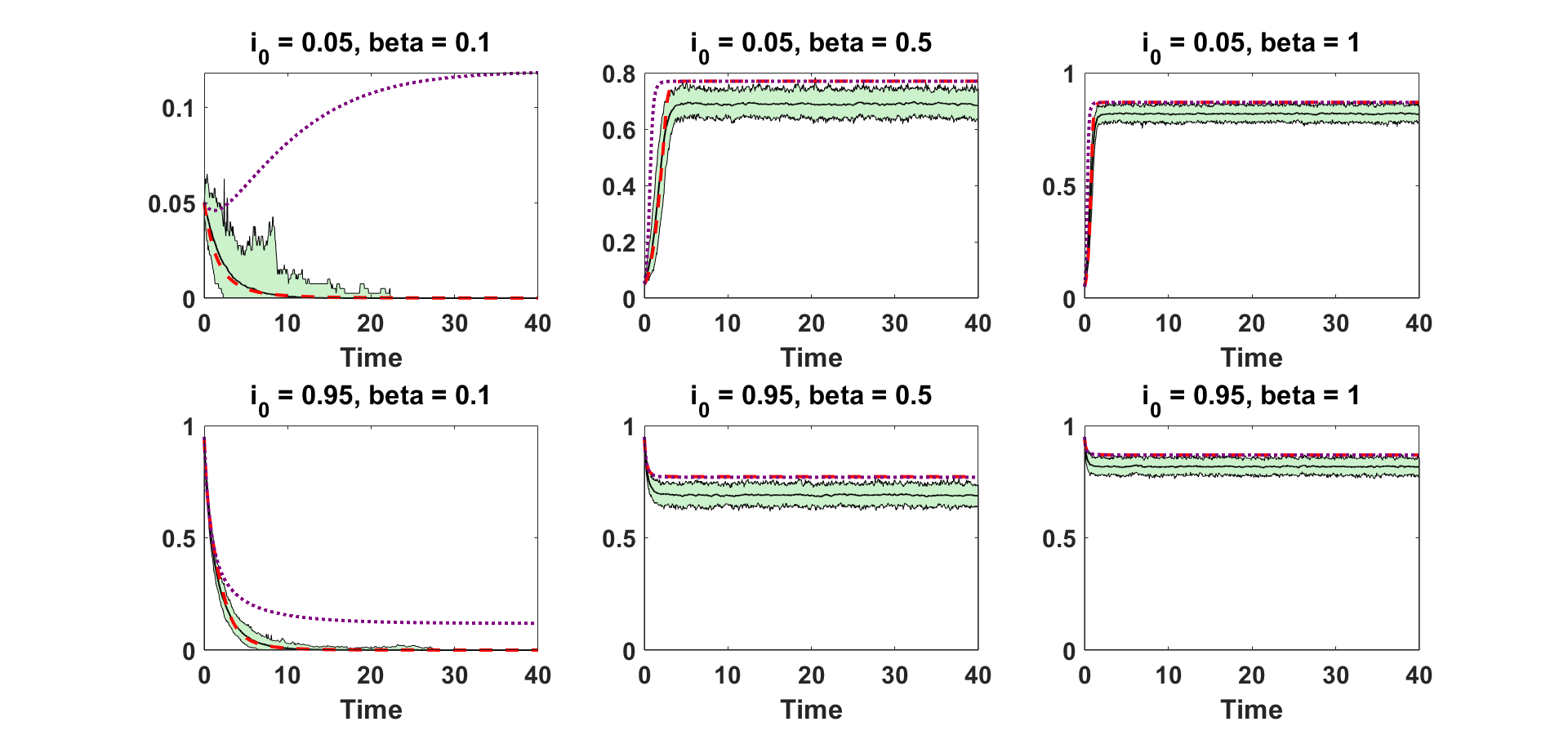}
    \caption{Collective contagion partitioned model. 
   Purple dots: mean field approximation from (\ref{eq:dynamical system})--(\ref{eq:gidef}).
    Red dashed line: mean field approximation from 
   (\ref{eq:alternative dynamical system})--(\ref{eq:alternative gidef}). 
    Black solid line: mean of the individual-level stochastic model. 
    }
    \label{fig: collective}
\end{figure}

\begin{figure}[htp]
    \centering
    \includegraphics[width=0.495\textwidth]{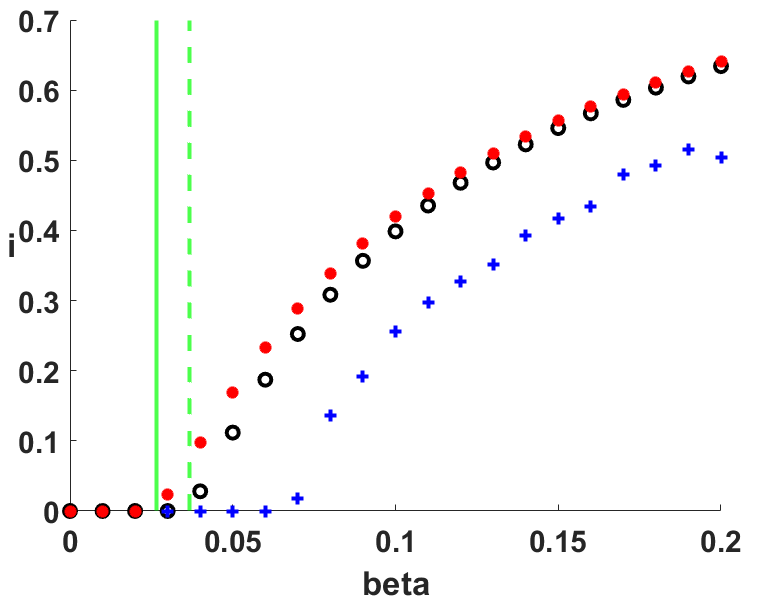}
    \includegraphics[width=0.495\textwidth]{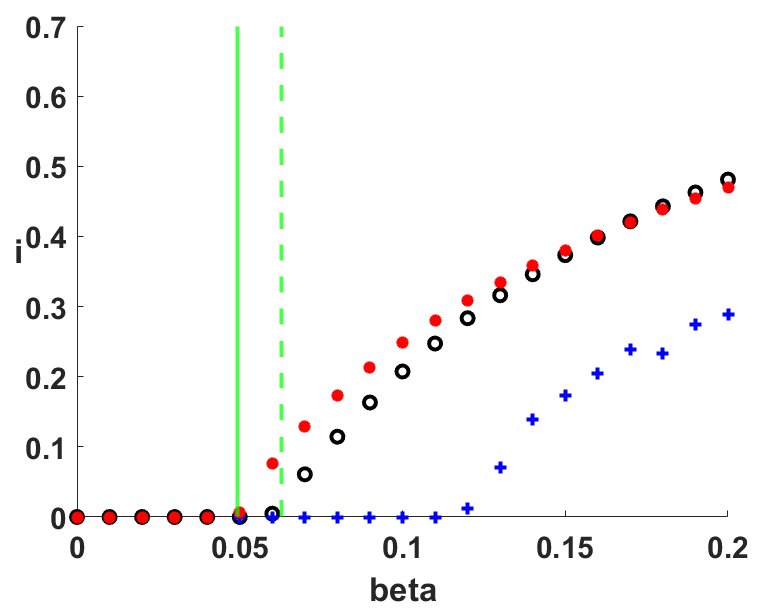}
    \caption{Infection function $2\log(1+x)$ (left) and  $\mathrm{arctan}$ (right).
    Horizontal axis is infection strength, 
      $\beta$. Vertical axis is  
      the proportion of infected individuals at time 
      $ T = 200$ for the two mean 
      field approximations,
   (\ref{eq:alternative dynamical system})--(\ref{eq:alternative gidef}) 
  (red dots) 
  and 
   (\ref{eq:dynamical system})--(\ref{eq:gidef})
    (black circles), and
   for the   
   individual-level stochastic model 
      (blue crosses), averaged over $10$ runs.
      The spectral bounds on $\beta$ 
      from (\ref{eq:mflocal}) and (\ref{eq:mflocal2})
     relating to the two mean field approximations, 
      are shown respectively as a solid green vertical line (below which the red dots must be $0$) and a dashed green  vertical line (below which the black circles must be $0$).
    }
    \label{fig: atan and 2log i_infty vs beta}
\end{figure}

\subsection{Collective contagion model: sensitivity to the initial condition}
\new{An interesting working assumption  
is that only hyperedges of size three or greater are present,
and hence there are no pairwise interactions. 
This circumstance may arise, for example, if we restrict attention to a workplace or school environment.}
 Here we look how this assumption may impact the predictive performance of the two mean field models, in the case of a collective contagion model. 
 We used the same infection rate functions as in 
 Figure~\ref{fig: collective}.
 In Figures~\ref{fig: i_0 discontinuity k>=3}, \ref{fig: i_0 discontinuity k>=4 more hyperedges}, and \ref{fig: i_0 discontinuity k>=4 fewer hyperedges}, we show, for both mean field models and the exact stochastic model, the proportion of infected individuals at time $T=100$ averaged over $5$ runs, as a function of the initial proportion $i_0$ of infected individuals. We observe that the mean field model given by (\ref{eq:dynamical system})--(\ref{eq:gidef}) remains relatively stable, while the behaviour of the mean field model given by (\ref{eq:alternative dynamical system})--(\ref{eq:alternative gidef}) appears to be sensitive to the initial condition $i_0$, its predictive performance degrading if $i_0$ is small (e.g., red dots in Figure~\ref{fig: i_0 discontinuity k>=3}). This sensitivity can be understood intuitively by recalling that the model in (\ref{eq:dynamical system})--(\ref{eq:gidef}) is expressed as a continuous function of $P\in \R^n$, while the model in (\ref{eq:alternative dynamical system})--(\ref{eq:alternative gidef}) is expressed in terms of step functions of the form $\1(\sum_i p_i\mathcal I_{i h}\leq c_1)$; the later are not  continuous functions of $P$ and are more sensitive to small perturbations of the initial condition. Furthermore, for initial value $P(0)=(i_0)_{i=1}^n$ sufficiently small that the threshold conditions of the above step functions are not satisfied, the infection rate expressed by (\ref{eq:alternative dynamical system})--(\ref{eq:alternative gidef}) will remain $0$, while the infection may start to spread according to the other models, thus yielding an underestimate of the propagation of the virus in the population.

We note that if the number of hyperedges is relatively low compared with the number of nodes (as in Figure~\ref{fig: i_0 discontinuity k>=4 fewer hyperedges}), then the exact model will not propagate, in which case the mean field model given by (\ref{eq:alternative dynamical system})--(\ref{eq:alternative gidef}) will give a better prediction. However, we see that both mean field models fail to accurately predict the behaviour of the model for sufficiently large initial condition $i_0$.

\begin{figure}[htp]
    \centering
    \includegraphics[width=0.6\textwidth]{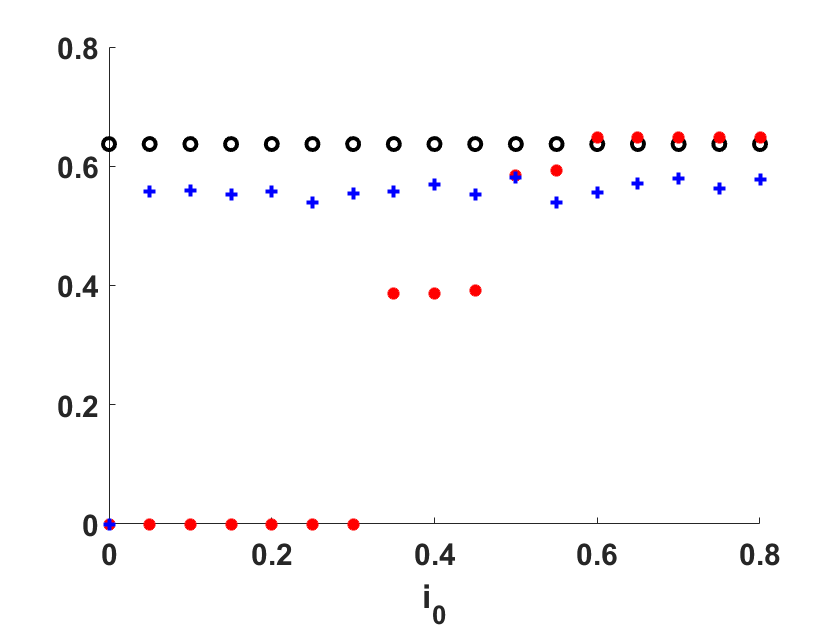}
    \caption{Proportion on infected individuals at time $T=100$ for the two mean field approximation models (red dots for (\ref{eq:alternative dynamical system})--(\ref{eq:alternative gidef}) and black circles for (\ref{eq:dynamical system})--(\ref{eq:gidef})) and the individual-level stochastic model (blue crosses). 
    Using $200$ hyperedges of size $3$, $100$ hyperedges of size $4$, $50$ hyperedges of size $5$ for $400$ nodes.
    }
    \label{fig: i_0 discontinuity k>=3}
\end{figure}

\begin{figure}[htp]
    \centering
    \includegraphics[width=0.6\textwidth]{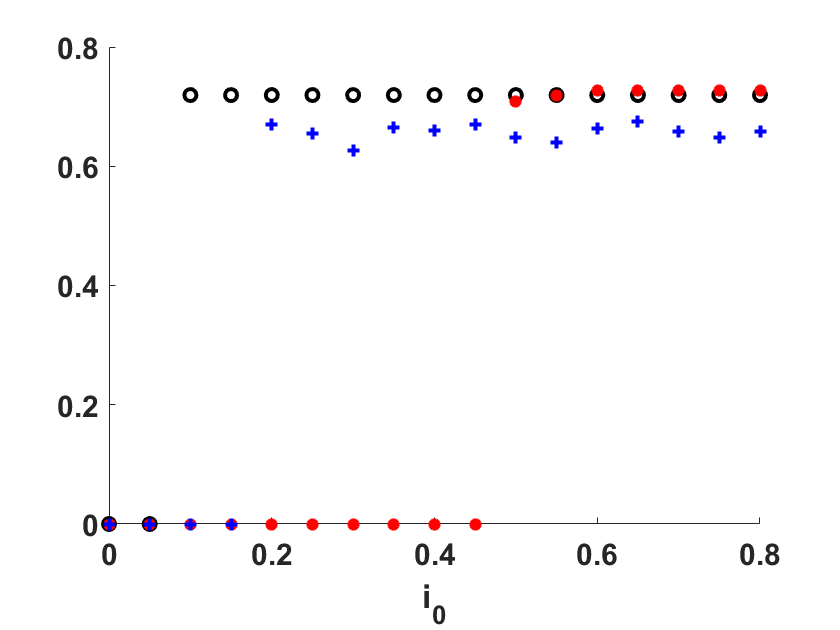}
    \caption{Proportion on infected individuals at time $T=100$ for the two mean field approximation models (red dots for (\ref{eq:alternative dynamical system})--(\ref{eq:alternative gidef}) and black circles for (\ref{eq:dynamical system})--(\ref{eq:gidef})) and the individual-level stochastic model (blue crosses). Using $200$ hyperedges of size $4$, $100$ hyperedges of size $5$ for $400$ nodes.
    }
    \label{fig: i_0 discontinuity k>=4 more hyperedges}
\end{figure}

\begin{figure}[htp]
    \centering
    \includegraphics[width=0.6\textwidth]{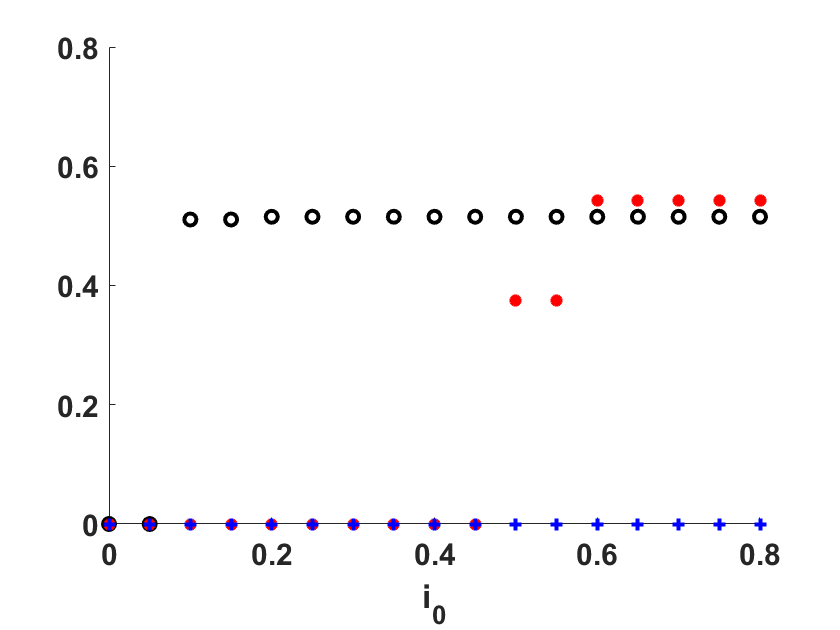}
    \caption{Proportion on infected individuals at time $T=100$ for the two mean field approximation models (red dots for (\ref{eq:alternative dynamical system})--(\ref{eq:alternative gidef}) and black circles for (\ref{eq:dynamical system})--(\ref{eq:gidef})) and the individual-level stochastic model (blue crosses). Using $100$ hyperedges of size $4$, $50$ hyperedges of size $5$ for $400$ nodes.
    }
    \label{fig: i_0 discontinuity k>=4 fewer hyperedges}
\end{figure}


\newpage

\section{Multi-type Model}
\label{sec:multi}
\new{In the above results, we assumed for simplicity that a fixed infection function $f$ applies for all hyperedges. The results, however, readily extend to a \emph{multi-type partition model},
where the infection rate function may depend on the category and size of the hyperedge.
As we discussed in section~\ref{sec:mod}, 
the categories of hyperedge may correspond to locations, such as 
households, schools, offices, shops and public transport vehicles, and 
hyperedge size may have an impact on transmission if individuals
are attempting to mutually distance. 
We will therefore explain how the main results change when we extend
the infection rate model. 
Let us partition the hyperedges of the hypergraph into $S$ disjoint families $\{\mathcal F_s\}_{s=1}^S$, such that to each family $\mathcal F_s$ corresponds an infection function $f_s$. For each $s\in \{1,2,\dots,S\}$ we may further partition the hyperedges in $\mathcal F_s$ into disjoint categories $\mathcal C_2^{(s)},\dots,\mathcal C_{K_s}^{(s)}$, where a hyperedge $h\in \mathcal F_s$ belongs to $\mathcal C_k^{(s)}$ if and only if $|h|=k$.
The infection rate model 
(\ref{eq:Xinf}) may then be extended to 
\begin{equation}
\lambda_i(X(t)) 
= 
\beta 
\,
\sum_{s=1}^{S}
\sum_{k=2}^{K_s}
\sum_{h\in \mathcal C_k^{(s)}}
{\mathcal I}_{i h}^{(s),(k)}
\,
f_s (
\sum_{j=1}^{n} 
{\mathcal I}_{j h}^{(s),(k)}
X_j
),
\label{eq:Xinfmulti}
\end{equation}
where $\mathcal I^{(s),(k)}$ is the incidence matrix inducing the subhypergraph spanned by the hyperedges of $\mathcal C_k^{(s)}\subset \mathcal F_s$, i.e., $\mathcal I^{(s),(k)}_{i h}=1$ if $h\in \mathcal C_k^{(s)}\subset \mathcal F_s$ and $i\in h$, and $\mathcal I^{(s),(k)}_{i h}=0$ otherwise. 
}
 We then have the following generalization of the ODE system in (\ref{eq:dynamical system})--(\ref{eq:gidef})
\begin{equation}
    \label{eq:generalized dynamical system}
    \frac{d P(t)}{dt}=g(P(t)),
\end{equation}
where $g:\R^n\to \R^n$ is defined by
\begin{equation}
    \label{eq:generalized gidef}
    g_i(P(t))=\beta\sum_{s=1}^S\sum_{k=2}^{K_s}\sum_{h\in \mathcal C_k^{(s)}}\mathcal I^{(s),(k)}_{i h}(\sum_{l=1}^kf_s(l)\Psi(h,l))(1-p_i(t))-\delta p_i(t).
\end{equation}
Define also $\mathcal I^{(s)}:=\sum_{k=2}^{K_s}\mathcal I^{(s),(k)}$ to be the incidence matrix inducing the subhypergraph spanned by the hyperedges in $\mathcal F_s$, and let $W^{(s)}:=\mathcal I^{(s)}(\mathcal I^{(s)})^T$, so that $W^{(s)}_{ij}$ records the number of hyperedges in $\mathcal F_s$ containing both $i$ and $j$. We then have the following results for the generalized partition model, which are extensions of Theorems~\ref{thm: local stability}, \ref{thm: global stability concave} and \ref{thm: global stability collective}.

\begin{thm}[General condition for local asymptotic stability]
\label{thm:genralized local asymptotic stability}
If 
\begin{equation}
\frac{ \beta \, 
\lambda\left(\sum_{s=1}^Sf_s(1)W^{(s)}\right)
}{\delta}<1,
\label{eq:as_multi}
\end{equation}
then $0\in \R^n$ is a locally asymptotic stable equilibrium for (\ref{eq:generalized dynamical system})--(\ref{eq:generalized gidef}).
\end{thm}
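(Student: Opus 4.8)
The plan is to mirror the proof of Theorem~\ref{thm: local stability}, replacing the single incidence matrix $\mathcal I$ by the collection $\{\mathcal I^{(s),(k)}\}$ and carrying an extra outer sum over the family index $s$. First I would observe that $g(0)=0$ in (\ref{eq:generalized dynamical system})--(\ref{eq:generalized gidef}), so $0\in\R^n$ is an equilibrium, and then invoke the standard linearization result of~\cite{Ve90}: local asymptotic stability follows once every eigenvalue of the Jacobian $\nabla g(0)$ has negative real part.

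Next I would differentiate (\ref{eq:generalized gidef}) exactly as in (\ref{eq:jac1})--(\ref{eq:jac2}); the off-diagonal and diagonal entries of $\nabla g(P)$ take the same form as before, but now with an outer sum over $s$ and with $\mathcal I^{(s),(k)}$ in place of $\mathcal I$. Evaluating at $P=0$, the term $\beta\sum_s\sum_k\sum_h \mathcal I^{(s),(k)}_{ih}\sum_l f_s(l)\Psi(h,l)$ vanishes because $\Psi(h,l)|_{P=0}=0$, so only the derivative-of-$\Psi$ contributions survive and $\nabla g(0)=B-\delta I$, where
\[
B_{ij}=\beta\sum_{s=1}^S\sum_{k=2}^{K_s}\sum_{h\in\mathcal C_k^{(s)}}\mathcal I^{(s),(k)}_{ih}\mathcal I^{(s),(k)}_{jh}\sum_{l=1}^k f_s(l)\,\frac{\partial\Psi}{\partial p_j}(h,l)\Big|_{P=0}.
\]

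Then I would reuse the computation already performed in the proof of Theorem~\ref{thm: local stability}: since $\Psi(h,l)$ has the same functional form in the node probabilities (only the index set $h$ changes), the identity $\partial\Psi(h,l)/\partial p_j|_{P=0}=\mathcal I^{(s),(k)}_{jh}\,\delta_{l1}$ holds verbatim. Substituting this collapses the $l$-sum to the single term $f_s(1)$, giving $B_{ij}=\beta\sum_{s=1}^S f_s(1)\sum_{k=2}^{K_s}\sum_{h\in\mathcal C_k^{(s)}}\mathcal I^{(s),(k)}_{ih}\mathcal I^{(s),(k)}_{jh}$. Because each $h\in\mathcal F_s$ lies in exactly one category $\mathcal C_k^{(s)}$ and the $\mathcal C_k^{(s)}$ are disjoint, we have $\mathcal I^{(s)}=\sum_k\mathcal I^{(s),(k)}$ with at most one summand nonzero per hyperedge, so the inner double sum equals $W^{(s)}_{ij}=(\mathcal I^{(s)}(\mathcal I^{(s)})^T)_{ij}$. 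Hence $B=\beta\sum_{s=1}^S f_s(1)W^{(s)}$ is symmetric, and $\lambda(\nabla g(0))=\lambda(B-\delta I)=\beta\,\lambda\big(\sum_{s=1}^S f_s(1)W^{(s)}\big)-\delta$; condition (\ref{eq:as_multi}) is precisely the requirement that this be negative, which finishes the argument.

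I do not expect a genuine obstacle here — the proof is a routine bookkeeping extension of Theorem~\ref{thm: local stability}. The only step needing a little care is the identification of $\sum_k\sum_{h\in\mathcal C_k^{(s)}}\mathcal I^{(s),(k)}_{ih}\mathcal I^{(s),(k)}_{jh}$ with $W^{(s)}_{ij}$, i.e. verifying that the size-category refinement of $\mathcal F_s$ introduces no cross terms between distinct $k$; this is immediate from the disjointness of the $\mathcal C_k^{(s)}$.
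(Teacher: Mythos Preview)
Your proposal is correct and matches the paper's intent: the paper does not give an explicit proof of Theorem~\ref{thm:genralized local asymptotic stability} but simply states that it is an extension of Theorem~\ref{thm: local stability}, and your argument carries out precisely that extension by adjoining the outer sum over $s$ and tracking the incidence matrices $\mathcal I^{(s),(k)}$. The one point you flagged as needing care---the identification of $\sum_k\sum_{h\in\mathcal C_k^{(s)}}\mathcal I^{(s),(k)}_{ih}\mathcal I^{(s),(k)}_{jh}$ with $W^{(s)}_{ij}$ via the disjointness of the size categories---is exactly right and is the only place where the multi-type bookkeeping requires a moment's thought.
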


\begin{thm}[Global asymptotic stability for a collective suppression model]
\label{thm: generalized global stability concave}
Suppose that $f_s$ is concave for all $s\in \{1,2,\dots,S\}$. 
\new{If
(\ref{eq:as_multi}) holds,}
 then $0\in \R^n$ is globally asymptotically stable for (\ref{eq:generalized dynamical system})--(\ref{eq:generalized gidef}).
\end{thm}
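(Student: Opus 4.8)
The plan is to follow the proof of Theorem~\ref{thm: global stability concave} step by step, carrying an extra outer sum over the families $\{\mathcal F_s\}_{s=1}^S$ through each stage. First I would note that $g(0)=0$ in (\ref{eq:generalized dynamical system})--(\ref{eq:generalized gidef}), so $0$ is an equilibrium, and invoke the global asymptotic stability criterion of \cite[Lemma~$1'$]{GSAcriterion}: it suffices to show that every eigenvalue of the symmetrized Jacobian $(\nabla g(P))^{(S)} := (\nabla g(P)+\nabla g(P)^T)/2$ is strictly negative for all $P\neq 0$ in $[0,1]^n$.

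Next I would differentiate (\ref{eq:generalized gidef}). The entries take exactly the form (\ref{eq:jac1})--(\ref{eq:jac2}), but with the sum $\sum_k\sum_{h\in\mathcal C_k}$ replaced by $\sum_{s=1}^S\sum_{k=2}^{K_s}\sum_{h\in\mathcal C_k^{(s)}}$, with $f$ replaced by $f_s$ and $\mathcal I$ by $\mathcal I^{(s),(k)}$ inside the $s$-th block. This lets me write $\nabla g(P) = B - \Lambda - \delta I$, where $\Lambda$ is the nonnegative diagonal matrix with $\Lambda_{ii} := \beta\sum_{s}\sum_{k}\sum_{h\in\mathcal C_k^{(s)}}\mathcal I^{(s),(k)}_{ih}\sum_{l=1}^k f_s(l)\Psi(h,l)$ and $B_{ij} := \beta\sum_{s}\sum_{k}\sum_{h\in\mathcal C_k^{(s)}}\mathcal I^{(s),(k)}_{ih}\mathcal I^{(s),(k)}_{jh}\sum_{l=1}^k f_s(l)\frac{\partial\Psi}{\partial p_j}(h,l)(1-p_i)$. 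Applying \cite[Lemma~$6.3$]{HdK21} with this $\Lambda$ discards the negative-semidefinite correction $-\Lambda$ and gives $\lambda((\nabla g(P))^{(S)}) \leq \lambda(B^{(S)}-\delta I)$, just as in the single-type proof.

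The core step is the per-hyperedge estimate. Lemmas~\ref{lemma: rewrite sum of partials in terms of x_k} and \ref{lemma: x_k negative} were proved for an arbitrary hyperedge after relabelling its nodes, and they use only the concavity of the infection function attached to that hyperedge; they therefore apply verbatim with $f$ replaced by $f_s$. Since each $f_s$ is concave, this yields $\sum_{l=1}^k f_s(l)\frac{\partial\Psi}{\partial p_j}(h,l) \leq f_s(1)$ for every hyperedge $h\in\mathcal C_k^{(s)}$ and every node $j$. Feeding this into the expression for $B$, exactly as in the proof of Theorem~\ref{thm: global stability concave}, produces the entrywise bound $0 \leq B^{(S)}_{ij} \leq \beta\sum_{s=1}^S f_s(1)\sum_{k=2}^{K_s}\sum_{h\in\mathcal C_k^{(s)}}\mathcal I^{(s),(k)}_{ih}\mathcal I^{(s),(k)}_{jh} = \beta\sum_{s=1}^S f_s(1)\,W^{(s)}_{ij}$, where the last equality uses $\sum_k\mathcal I^{(s),(k)}_{ih}\mathcal I^{(s),(k)}_{jh} = (\mathcal I^{(s)}(\mathcal I^{(s)})^T)_{ij} = W^{(s)}_{ij}$.

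Finally, since $B^{(S)}$ and $\beta\sum_s f_s(1)W^{(s)}$ are symmetric and entrywise nonnegative with the former dominated entrywise by the latter, the elementary monotonicity of the largest eigenvalue of a nonnegative symmetric matrix gives $\lambda((\nabla g(P))^{(S)}) \leq \lambda\big(\beta\sum_{s=1}^S f_s(1)W^{(s)} - \delta I\big) = \beta\,\lambda\big(\sum_{s=1}^S f_s(1)W^{(s)}\big) - \delta$, which is strictly negative precisely under (\ref{eq:as_multi}). I expect the only real obstacle to be bookkeeping: confirming that the combinatorial identities in Lemmas~\ref{lemma: rewrite sum of partials in terms of x_k} and \ref{lemma: x_k negative} are genuinely local to a single hyperedge and its own $f_s$, so that allowing different concave functions on different families introduces no cross terms; granting that, every remaining step is a direct transcription of the single-type proof with $f(1)W$ replaced throughout by $\sum_{s=1}^S f_s(1)W^{(s)}$.
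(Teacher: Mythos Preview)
Your proposal is correct and follows exactly the route the paper intends: the paper states Theorem~\ref{thm: generalized global stability concave} without a separate proof, presenting it simply as the multi-type extension of Theorem~\ref{thm: global stability concave}, and your argument is precisely that extension---carrying the outer sum over $s$ through the Jacobian computation, applying Lemmas~\ref{lemma: rewrite sum of partials in terms of x_k} and \ref{lemma: x_k negative} hyperedge by hyperedge with the relevant $f_s$, and replacing $f(1)W$ by $\sum_{s} f_s(1)W^{(s)}$ in the final spectral bound. The bookkeeping concern you flag is indeed the only thing to check, and your observation that those lemmas are local to a single hyperedge and its own infection function is correct.
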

 
\begin{thm}[Global asymptotic stability for a collective contagion model]
\label{thm:generalized global stability collective}
Suppose that for each $s\in \{1,2,\dots,S\}$, $f_s(x):=c_{2,s}\1(x\geq c_{1,s})$, where 
\new{$c_{1,s} \ge 2$ and $c_{2,s}>0$}.
If 
\[
\frac{ \beta \, 
\lambda\left(\sum_{s=1}^S\frac{c_{2,s}}{c_{1,s}}W^{(s)}\right)}
{\delta}<1,
\]
then $0\in \R^n$ is globally asymptotically stable for (\ref{eq:generalized dynamical system})--(\ref{eq:generalized gidef}).
\end{thm}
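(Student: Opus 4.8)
The plan is to prove this by mimicking the proof of Theorem~\ref{thm: global stability concave} (equivalently, Theorem~\ref{thm: global stability collective}), carrying the extra family index $s$ through the bookkeeping. Since $g(0)=0$, the disease-free state is an equilibrium of (\ref{eq:generalized dynamical system})--(\ref{eq:generalized gidef}), so by the global asymptotic stability criterion in \cite[Lemma~$1'$]{GSAcriterion} it suffices to show that every eigenvalue of the symmetrized Jacobian $(\nabla g(P))^{(S)}:=(\nabla g(P)+\nabla g(P)^T)/2$ is strictly negative at each $P\neq 0$.

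First I would differentiate (\ref{eq:generalized gidef}) exactly as in (\ref{eq:jac1})--(\ref{eq:jac2}), the only changes being the outer sum over $s$ and the appearance of $\mathcal I^{(s),(k)}$ in place of $\mathcal I$. This gives $\nabla g(P)=B-\Lambda-\delta I$, where $\Lambda$ is the diagonal matrix with nonnegative entries $\Lambda_{ii}=\beta\sum_{s}\sum_{k}\sum_{h\in\mathcal C_k^{(s)}}\mathcal I^{(s),(k)}_{ih}\sum_{l=1}^k f_s(l)\Psi(h,l)$, arising from the $-\Psi(h,l)$ term in the product rule, and $B_{ij}=\beta\sum_{s}\sum_{k}\sum_{h\in\mathcal C_k^{(s)}}\mathcal I^{(s),(k)}_{ih}\mathcal I^{(s),(k)}_{jh}\left(\sum_{l=1}^k f_s(l)\frac{\partial\Psi}{\partial p_j}(h,l)\right)(1-p_i)$. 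Applying Lemma~$6.3$ of \cite{HdK21} with this $\Lambda$ gives $\lambda((\nabla g(P))^{(S)})\le\lambda(B^{(S)}-\delta I)$, so it remains to show $\lambda(B^{(S)})<\delta$.

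Second I would bound $B^{(S)}$ entrywise. Fix a family $s$, a hyperedge $h\in\mathcal F_s$, and a node $j$: if $j\notin h$ the corresponding summand vanishes since $\mathcal I^{(s),(k)}_{jh}=0$, while if $j\in h$ then, after relabelling the nodes of $h$ so that $j$ is the last one (the convention introduced before Lemma~\ref{lemma: bernouilli sum}), the quantity $\sum_{l}f_s(l)\frac{\partial\Psi}{\partial p_j}(h,l)$ is exactly the one bounded in the discussion immediately preceding Theorem~\ref{thm: global stability collective}; that argument, which combines $f_s(x)=c_{2,s}\1(x\ge c_{1,s})\le(c_{2,s}/c_{1,s})\,x$ for $x\in\N$ with Lemmas~\ref{lemma: rewrite sum of partials in terms of x_k} and \ref{lemma: x_k negative} applied to the identity function, yields $\sum_{l}f_s(l)\frac{\partial\Psi}{\partial p_j}(h,l)\le c_{2,s}/c_{1,s}$, and this quantity is nonnegative because $f_s$ is nondecreasing. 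Together with $0\le 1-p_i\le 1$, this gives $0\le B_{ij}\le\beta\sum_{s}(c_{2,s}/c_{1,s})\sum_{k}\sum_{h\in\mathcal C_k^{(s)}}\mathcal I^{(s),(k)}_{ih}\mathcal I^{(s),(k)}_{jh}=\beta\sum_{s}(c_{2,s}/c_{1,s})W^{(s)}_{ij}$, where the last equality uses $\mathcal I^{(s)}=\sum_k\mathcal I^{(s),(k)}$ and $W^{(s)}=\mathcal I^{(s)}(\mathcal I^{(s)})^T$; the same bounds then hold for $B^{(S)}_{ij}=(B_{ij}+B_{ji})/2$.

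Finally, $B^{(S)}$ and $C:=\beta\sum_{s}(c_{2,s}/c_{1,s})W^{(s)}$ are symmetric with nonnegative entries, and $B^{(S)}$ is dominated entrywise by $C$, so monotonicity of the Perron root under entrywise domination of nonnegative matrices gives $\lambda(B^{(S)})\le\lambda(C)=\beta\lambda\left(\sum_{s}(c_{2,s}/c_{1,s})W^{(s)}\right)$. Hence $\lambda((\nabla g(P))^{(S)})\le\beta\lambda\left(\sum_{s}(c_{2,s}/c_{1,s})W^{(s)}\right)-\delta<0$ under the stated hypothesis, which completes the argument. I expect the only genuine obstacle to be the middle step---the per-family estimate of $\sum_{l}f_s(l)\frac{\partial\Psi}{\partial p_j}(h,l)$---since that is where the combinatorial identities of Lemmas~\ref{lemma: bernouilli sum}--\ref{lemma: x_k negative} do the substantive work; everything else is the routine re-indexing forced by the partition $\{\mathcal F_s\}_{s=1}^S$.
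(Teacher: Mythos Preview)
Your proposal is correct and follows essentially the same approach as the paper: the paper does not give an explicit proof of Theorem~\ref{thm:generalized global stability collective} but simply presents it as the multi-type extension of Theorem~\ref{thm: global stability collective}, whose one-line proof already consists of re-running the argument for Theorem~\ref{thm: global stability concave} with the bound $\sum_l f(l)\,\partial\Psi/\partial p_j(h,l)\le c_2/c_1$ in place of $f(1)$. Your write-up carries out exactly this re-indexing over the families $\{\mathcal F_s\}$, and your justification of nonnegativity of $\sum_l f_s(l)\,\partial\Psi/\partial p_j(h,l)$ via monotonicity of $f_s$ is a clean way to secure the lower bound $0\le B_{ij}$ that the paper asserts without comment.
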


\section{Summary and Conclusions}
\label{sec:conc}

\new{
Hypergraphs offer more flexibility and realism than 
pairwise, graph-based models and they are relevant to many spreading processes where members of a 
    population form groups.  
    In the pairwise setting, with linear infection rates, 
    graph-based models have been widely studied, 
    and spectral stability bounds derived \cite{epidemicsSpread,HS2020,virusSpreadInNetworks,spreadingEigenvalue}.
     Spectral analysis for 
    the hypergraph case was initially developed 
    in \cite{HdK21}, both for an exact individual-level 
    stochastic model and a deterministic mean field approximation.
    In this work we focused on a more sophisticated mean field approximation that was proposed in \cite{APM20} and 
    requires a more detailed analysis. 
    Although this ODE system produces real-valued trajectories, 
    it has the unusual feature of evaluating the 
    nonlinear infection rate function only at integer  
    arguments.
    Intuitively, since the infection function is zero at the origin, 
    this feature is likely to make the approximation more accurate than the version in 
    \cite{HdK21}
    in the case of concave 
    nonlinearity and small infection levels.
    This behaviour was observed in 
     our computational tests 
     (Figures~\ref{fig: atan}--\ref{fig: min}  and 
      Figure~\ref{fig: atan and 2log i_infty vs beta})   
    and is backed by our theoretical 
    analysis---in the concave case, this mean field model produces a locally 
    asymptotically stable disease-free state 
    under the same condition as the exact model
    (see Theorem~\ref{thm: vanishing condition for exact mean field model} with $c_f = f(1)$ and Theorem~\ref{thm: global stability concave}).
    However, for other types of nonlinear infection rate, it is possible for the mean field model in 
     \cite{HdK21} to give a better approximation 
     (Figure~\ref{fig: collective}). 
     Hence the main conclusion from this work is that both 
     mean field models can be analysed 
      rigorously and both can provide useful information.

    It is notable that the spectral conditions for 
    decay of the disease level 
    appearing in our results have the form 
    \[
    \frac{ \beta \, c \, 
\lambda(W) }
{\delta} <1,
\]
    for some constant $c$ that is determined by the 
    type of nonlinear infection rate 
    (with generalized versions in section~\ref{sec:multi}).
    This expression separates out different aspects of the process in a natural manner and offers a means to
    inform mitigation strategies.
    The parameters $\beta$ and $\delta$ quantify the inherent infectiousness
    and recovery rate for the disease, respectively.
    The constant $c$ is affected by the way that the chance of a new infection 
    depends on the number of infected people in a group. This
    could be controlled by changing 
     behavioural patterns; for example, through
      face-covering or social distancing.
      The factor $\lambda(W)$, which 
      summarizes the interaction structure, could be reduced by lockdown measures that restrict movement and therefore 
      limit physical encounters.
      Hence, it would be of interest to calibrate 
      a hypergraph model against real data and investigate the 
      predictive power of these spectral bounds.
    }

\bibliographystyle{siamplain}
\typeout{}
\bibliography{refs}

\begin{thebibliography}{10}

\bibitem{ABAMPL21}
{\sc U.~Alvarez-Rodriguez, F.~Battiston, G.~F. de~Arruda, Y.~Moreno, M.~Perc,
  and V.~Latora}, {\em Evolutionary dynamics of higher-order interactions in
  social networks}, Nat. Hum. Behav.,  (2021).

\bibitem{BCILLPYP21}
{\sc F.~Battiston, G.~Cencetti, I.~Iacopini, V.~Latora, M.~Lucas, A.~Patania,
  J.-G. Young, and G.~Petri}, {\em Networks beyond pairwise interactions:
  Structure and dynamics}, Physics Reports, 874 (2020), pp.~1--92.

\bibitem{BASJK18}
{\sc A.~R. Benson, R.~Abebe, M.~T. Schaub, A.~Jadbabaie, and J.~Kleinberg},
  {\em Simplicial closure and higher-order link prediction}, Proceedings of the
  National Academy of Sciences, 115 (2018), pp.~E11221--E11230.

\bibitem{benson2016higher}
{\sc A.~R. Benson, D.~F. Gleich, and J.~Leskovec}, {\em Higher-order
  organization of complex networks}, Science, 353 (2016), pp.~163--166.

\bibitem{SISonHypergraphs}
{\sc A.~Bod\'{o}, G.~Katona, and P.~Simon}, {\em S{I}{S} epidemic propagation
  on hypergraphs}, Bulletin of Mathematical Biology, 78 (2016), pp.~713--735.

\bibitem{bretto2013hypergraph}
{\sc A.~Bretto}, {\em Hypergraph Theory: An introduction}, Springer, Berlin,
  2013.

\bibitem{APM20}
{\sc G.~F. de~Arruda, G.~Petri, and Y.~Moreno}, {\em Social contagion models on
  hypergraphs}, Phys. Rev. Res., 2 (2020).

\bibitem{Ebola15}
{\sc M.-A. de~La~Vega, G.~Caleo, J.~Audet, X.~Qiu, R.~A. Kozak, J.~I. Brooks,
  S.~Kern, A.~Wolz, A.~Sprecher, J.~Greig, K.~Lokuge, D.~K. Kargbo, B.~Kargbo,
  A.~D. Caro, A.~Grolla, D.~Kobasa, J.~E. Strong, G.~Ippolito, M.~V. Herp, and
  G.~P. Kobinger}, {\em Ebola viral load at diagnosis associates with patient
  outcome and outbreak evolution}, The Journal of Clinical Investigation, 125
  (2015), pp.~4421--4428.

\bibitem{DW05}
{\sc P.~S. Dodds and D.~J. Watts}, {\em A generalized model of social and
  biological contagion}, Journal of Theoretical Biology, 232 (2005),
  pp.~587--604.

\bibitem{ER06}
{\sc E.~Estrada and J.~A. Rodríguez-Vel{\'a}zquez}, {\em Subgraph centrality
  and clustering in complex hyper-networks}, Physica A: Statistical Mechanics
  and its Applications, 364 (2006), pp.~581--594.

\bibitem{epidemicsSpread}
{\sc A.~Ganesh, L.~Massouli\'{e}, and D.~Towsley}, {\em The effect of network
  topology on the spread of epidemics}, Proceedings - IEEE INFOCOM, 2 (2005),
  pp.~1455--1466.

\bibitem{GSAcriterion}
{\sc P.~Hartman}, {\em On the stability in the large for systems of ordinary
  differential equations}, Canadian Journal of Mathematics, 13 (1961),
  pp.~480--492.

\bibitem{HS2020}
{\sc H.~A. Herrmann and J.-M. Schwartz}, {\em Why {COVID}-19 models should
  incorporate the network of social interactions}, Physical Biology, 17,
  p.~065008.

\bibitem{HdK21}
{\sc D.~J. Higham and H.-L. de~Kergorlay}, {\em Epidemics on hypergraphs:
  Spectral thresholds for extinction}, Proceedings of the Royal Society, Series
  A, 477 (2021).

\bibitem{simplicialSocialContagion}
{\sc I.~Iacoponi, G.~Petri, A.~Barrat, and V.~Latora}, {\em Simplicial models
  of social contagion}, Nature Communications, 10 (2019).

\bibitem{KMS17}
{\sc I.~Kiss, J.~C. Miller, and P.~L. Simon}, {\em Mathematics of Epidemics on
  Networks: From Exact to Approximate Models}, Springer, Berlin, 2017.

\bibitem{Maj18}
{\sc A.~Koriat, S.~Adiv-Mashinsky, M.~Undorf, and N.~Schwarz}, {\em The
  prototypical majority effect under social influence}, Personality and Social
  Psychology Bulletin, 44 (2018), pp.~670--683.

\bibitem{heterogeneityHypergraph}
{\sc N.~W. Landry and J.~G. Restrepo}, {\em The effect of heterogeneity on
  hypergraph contagion models}, Chaos, 30 (2020).

\bibitem{virusSpreadInNetworks}
{\sc P.~V. Mieghem, J.~Omic, and R.~Kooij}, {\em Virus spread in networks},
  IEEE Transactions on Networking, 17 (2009).

\bibitem{ODI}
{\sc M.~Petrovitch}, {\em Sur une mani\`{e}re d'\'{e}tendre le th\'{e}or\`{e}me
  de la moyenne aux \'{e}quations diff\'{e}rentielles de premier ordre},
  Mathematische Annalen, 54 (1901), pp.~417--436.

\bibitem{Torres_2020}
{\sc J.~J. Torres and G.~Bianconi}, {\em Simplicial complexes: higher-order
  spectral dimension and dynamics}, Journal of Physics: Complexity, 1 (2020),
  p.~015002.

\bibitem{Ve90}
{\sc F.~Verhulst}, {\em Nonlinear Differential Equations and Dynamical
  Systems}, Springer-Verlag, Berlin, 1990.

\bibitem{DZLL21}
{\sc D.~Wang, Y.~Zhao, J.~Luo, and H.~Leng}, {\em Simplicial {SIRS} epidemic
  models with nonlinear incidence rates}, Chaos: An Interdisciplinary Journal
  of Nonlinear Science, 31 (2021), p.~053112.

\bibitem{spreadingEigenvalue}
{\sc Y.~Wang, D.~Chakrabarti, C.~Wang, and C.~Faloutsos}, {\em Epidemic
  spreading in real networks: an eigenvalue point of view}, Proceedings 22nd
  International Symposium on Reliable Distributed Systems,  (2003).

\end{thebibliography}
\end{document}